\documentclass[12pt, a4paper]{amsart}
\usepackage{mathptmx}
\usepackage[top=25mm, bottom=30mm,left=25mm,right=25mm]{geometry}
\usepackage{amsmath, amssymb,amsthm}
\usepackage[colorlinks=true,citecolor=blue]{hyperref}
\usepackage[mathlines]{lineno}

\newtheorem{theorem}{Theorem}[section]
\newtheorem{lemma}[theorem]{Lemma}
\newtheorem{ques}{Question}

\newcounter{MainTheoremCounter}

\newtheorem{Maintheorem}[MainTheoremCounter]{Theorem}

\theoremstyle{definition}

\numberwithin{equation}{section}
\newtheorem*{ac}{Acknowledgements}

\begin{document}
	
	\title{Zero-Threshold Discrepancies for Multiple Correlation Sequences}
	\author[K.~Ouyang]{Kangbo Ouyang}
	\email{oy19981231@mail.ustc.edu.cn}
	\author[J.~Qiu]{Jiahao Qiu}
	\email{qiujh@mail.ustc.edu.cn}
	\author[X.~Ye]{Xiangdong Ye}
	\email{yexd@ustc.edu.cn}
	\address[K.~Ouyang, J.~Qiu, X.~Ye]
	{School of Mathematical Sciences\\
		University of Science and Technology of China\\
		Hefei, Anhui 230026, PR China}
	
	\date{\today}
	
	\keywords{Pro-nilfactors, zero-threshold lifting, multiple correlation sequences}
	\subjclass[2020]{Primary 37A45; Secondary 37A30, 05D10}
	
	{\begin{abstract}

		We study the zero-threshold lifting problem for polynomial multiple correlation sequences with respect to the measure-theoretic pro-nilfactor.
		The structure theory for polynomial multiple averages implies that, at every positive threshold,
		positivity on the pro-nilfactor lifts to positivity in the original system,
		except on a set of zero upper Banach density.
		We demonstrate that this lifting property does not hold at the zero threshold.
		
		Specifically, we construct an ergodic system and two sets of positive measure for which the pro-nilfactor correlation is positive along a set of times with positive upper density,
		while the corresponding exact correlation vanishes on this set.
		This provides a negative answer to a question posed by Glasscock, Koutsogiannis,  Le,  Moreira,  Richter, and  Robertson. 
		Additionally, we prove a corresponding rigidity property.
		For any ergodic system, any essentially distinct family of integer polynomials vanishing at the origin,
		and any tuple of non-negative bounded functions, the zero-threshold discrepancy set is not piecewise syndetic.
		
	\end{abstract}}
	
	\maketitle
	
	\section{Introduction}
	
	Furstenberg's ergodic proof of Szemer\'edi's theorem \cite{Sze75, F77},
	alongside the correspondence principle developed in \cite{F81},
	established a fundamental connection between additive combinatorics and ergodic theory.
	Through this correspondence, density questions for subsets of integers are naturally translated into recurrence questions for measure-preserving systems.
	
	This perspective motivates the study of polynomial multiple ergodic averages and their corresponding correlation sequences.
	A central problem in this area is to identify the \emph{characteristic factors} governing their asymptotic behavior.
	For linear averages, this was resolved by the Host--Kra \cite{HK05} , and Ziegler \cite{TZ07} structure theory.
	The theory was subsequently extended to polynomial iterates by Host--Kra \cite{HK05B} and Leibman \cite{Le05C} (see also \cite{HK18} for a comprehensive treatment).
	Together, these results demonstrate that nilpotent structures govern the convergence of multiple ergodic averages.
	
	Building upon these characteristic factor theorems, the structural decomposition of multiple correlation sequences has been widely studied. For a single transformation, this structure is well understood. Following the work of Bergelson, Host, and Kra \cite{BHK05} for linear iterates, Leibman \cite{Le10,Le15} established that every polynomial multiple correlation sequence admits a decomposition into a nilsequence and a Banach-null sequence.
	
	Motivated by these results, Frantzikinakis \cite{Fra16} conjectured that this structural dichotomy extends to systems of commuting transformations. This broader direction has prompted further studies on the spectral and structural properties of multiple correlation sequences \cite{FN15, MR19}, culminating in Leng's resolution of Frantzikinakis' conjecture via finitary inverse theorems \cite{Len25}.
	
	In this paper, we investigate the lifting properties of these correlation sequences for a single transformation.
	Let $\mathcal Z_k$ denote the $k$-step pro-nilfactor of an ergodic system $(X,\mu,T)$, and define the $\infty$-step pro-nilfactor as the increasing join $\mathcal Z_\infty:=\bigvee_{k\ge1}\mathcal Z_k$.

	For polynomial multiple correlations with essentially distinct iterates, $\mathcal Z_\infty$ governs the relevant asymptotic behavior in density. More precisely, let $p_1,\ldots,p_d\in\mathbb Z[t]$ be integer polynomials satisfying $p_i(0)=0$, and assume that $p_i-p_j$ is non-constant for $i\ne j$. Throughout our discussion of positivity and zero-threshold lifting, all functions are assumed to be real-valued and non-negative. Given non-negative functions $f_1,\ldots,f_d\in L^\infty(\mu)$, we define the exact correlation sequence
	\[
	\alpha(n)
	:=
	\int_X
	T^{p_1(n)}f_1 \cdots T^{p_d(n)}f_d
	\,d\mu,
	\]
	and the corresponding pro-nilfactor correlation sequence
	\[
	\beta(n)
	:=
	\int_X
	T^{p_1(n)}\mathbb E(f_1\mid\mathcal Z_\infty)
	\cdots
	T^{p_d(n)}\mathbb E(f_d\mid\mathcal Z_\infty)
	\,d\mu.
	\]
	
	By the characteristic factor property, the difference between these two sequences is Banach-null, meaning
	\[
	\lim_{N-M\to\infty} \frac{1}{N-M} \sum_{n=M}^{N-1} |\alpha(n)-\beta(n)| = 0.
	\]
	It follows that positivity on the pro-nilfactor lifts to the original system at any strictly positive threshold. Specifically, if $0<\varepsilon_1<\varepsilon_2$, then the exceptional set
	\[
	\{n\in\mathbb Z:\beta(n)>\varepsilon_2\} \setminus
	\{n\in\mathbb Z:\alpha(n)>\varepsilon_1\}
	\]
	has zero upper Banach density, {as shown in \cite[Theorem 5.8]{GKLMMRR}.}
	
	At the zero threshold, however, this $L^1$-approximation does not control the set of times for which $\beta(n)$ is strictly positive while $\alpha(n)$ vanishes. This motivates the following question, which arises naturally in the study of polynomial return times~\cite[Question 5.9]{GKLMMRR}.
	
	\begin{ques}\label{ques-1}
		Let $(X,\mu,T)$ be an ergodic system, and let $p_1,\ldots,p_d\in\mathbb Z[t]$ satisfy $p_i(0)=0$ with $p_i-p_j$ non-constant for $i\ne j$. Let $f_1,\ldots,f_d\in L^\infty(\mu)$ be non-negative functions, and let $\alpha(n)$ and $\beta(n)$ be defined as above. Is it true that
		\[
		d^*\bigl(\{n\in\mathbb Z:\beta(n)>0\}\setminus
		\{n\in\mathbb Z:\alpha(n)>0\}\bigr)=0?
		\]
	\end{ques}
	
\medskip

	We call the exceptional set
	\[
	D_0
	:=
	\{n\in\mathbb Z:\beta(n)>0\}
	\setminus
	\{n\in\mathbb Z:\alpha(n)>0\}
	\]
	the \emph{zero-threshold discrepancy set}. Question~\ref{ques-1} asks whether $D_0$ must always have zero upper Banach density.
	
	In this paper, we demonstrate that zero-threshold lifting can fail on a set of positive upper density, but not on a piecewise syndetic set. This positive-density discrepancy occurs even in the two-term linear case, specifically for $p_1(n)=0$ and $p_2(n)=n$, indicating that the phenomenon is independent of higher-order polynomial complexity. Conversely, our rigidity theorem shows that such discrepancy sets are not piecewise syndetic.
	
	Let $\mathcal{P}_0$ denote the set of all integer polynomials that vanish at the origin, that is, $\mathcal P_0 := \{p \in \mathbb{Z}[t] : p(0) = 0\}$.
	
	Our first main result answers Question~\ref{ques-1} in the negative, showing that zero-threshold lifting can fail on a set of times with positive upper density.
	
	\begin{Maintheorem}
		\label{thm:positive-density-counterexample}
		There exists an ergodic system $(X,\mu,T)$ and measurable sets $A,B\subseteq X$ with $\mu(A)>0$ and $\mu(B)>0$ such that, for $f_1=\mathbf 1_A$ and $f_2=\mathbf 1_B$, the positivity sets
		\[
		\alpha= \left\{ n\in\mathbb Z : \int_X f_1\cdot T^n f_2\,d\mu>0 \right\}
		\quad \text{and} \quad
		\beta = \left\{ n\in\mathbb Z : \int_X \mathbb E(f_1\mid\mathcal Z_\infty)\cdot T^n\mathbb E(f_2\mid\mathcal Z_\infty) \,d\mu>0 \right\}
		\]
		satisfy the property that the difference set $\beta\setminus \alpha$ has positive upper density. In particular, $\beta\setminus \alpha$ has positive upper Banach density.
	\end{Maintheorem}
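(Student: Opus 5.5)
The key observation is that the construction cannot take $\mathcal Z_\infty$ to be trivial. If $(X,\mu,T)$ were weakly mixing, then $\beta(n)=\mu(A)\mu(B)$ would be a positive constant. On the other hand, $\alpha(n)\to\mu(A)\mu(B)$ along a set of density one, so $\{\alpha=0\}$ would have density zero. The Banach-null estimate $\frac1{N-M}\sum|\alpha-\beta|\to0$ has the same consequence in general: on a set of positive upper density where $\alpha=0$, the values $\beta(n)$ must be small on average. So the plan is to build a system whose pro-nilfactor is a nontrivial rotation, and to arrange that $\beta$ is positive but tiny exactly on the times where we force $\alpha$ to vanish.

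\emph{Step 1 (the factor).} Fix an irrational rotation $R_\theta$ on $\mathbb T$ with Lebesgue measure $m$. Choose $A\subseteq \pi^{-1}(I)$ and $B\subseteq\pi^{-1}(J)$, where $I,J$ are arcs and $\pi$ is the factor map. Then $\mathbb E(\mathbf 1_A\mid\mathcal Z_\infty)$ and $\mathbb E(\mathbf 1_B\mid\mathcal Z_\infty)$ are positive multiples of $\mathbf 1_I$ and $\mathbf 1_J$ (up to fiber densities). Hence
\[
\beta(n)=c\, m\bigl(I\cap (J-n\theta)\bigr)
\]
for some constant $c>0$. This quantity is positive precisely when the arcs $I$ and $J-n\theta$ overlap. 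Let
\[
E_\delta=\{n: 0<m(I\cap(J-n\theta))<\delta\}
\]
be the set of ``barely overlapping'' times. By equidistribution, $E_\delta$ has density comparable to $\delta$, which is positive.

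\emph{Step 2 (the extension).} Realize $X$ as a skew product $\mathbb T\times Y$ over $R_\theta$, or as a cutting-and-stacking extension. The requirement is that the extension be relatively weakly mixing over the rotation, so that $\mathcal Z_\infty$ (indeed already $\mathcal Z_1$, with all higher $\mathcal Z_k$ equal to it) is exactly the rotation factor. The fiber sets $A_z,B_z$ should be chosen so that, for $n$ in a prescribed set $E$, the sets $A$ and $T^{-n}B$ are exactly disjoint. Concretely, I would take a rapidly increasing sequence $N_k$ and a sequence $\delta_k\to0$ fast. I would then let $E=\bigcup_k \bigl(E_{\delta_k}\cap[N_k,2N_k]\bigr)$, so that $E$ has upper density bounded below uniformly in $k$ if $\delta_k$ is not too small relative to the block structure. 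Alternatively, I would use fixed $\delta$ and thin the fibers instead. For $n\in E$, only a thin region of $z$'s (of measure less than $\delta_k$) contributes to $\alpha(n)$. The cocycle is then built inductively (stage $k$ handles the block $[N_k,2N_k]$) so that on that thin region the fiber maps send $A_z$ off $B_{z+n\theta}$. At the same time the cocycle is kept ``generic'' elsewhere to preserve ergodicity and relative weak mixing. Then $\alpha(n)=0<\beta(n)$ on $E$, giving $\overline d(\beta\setminus\alpha)>0$, and hence positive upper Banach density.

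\emph{Main obstacle.} The hard step is Step 2: enforcing \emph{exact} disjointness on a set of positive upper density while still guaranteeing that the extension adds no nilfactor eigen-structure, i.e.\ that $\mathcal Z_\infty$ is exactly the rotation. Exact vanishing is a rigid constraint, whereas relative weak mixing is an asymptotic one. I would try to reconcile them by letting the constrained region have measure $\delta_k\to0$ and the constrained times occupy ever-sparser scales. I would then verify relative weak mixing via a criterion testing along density-one sets; since the constraints act on sets of small measure, they should not affect such a criterion. I would also check separately that the inductive modifications converge and preserve ergodicity.
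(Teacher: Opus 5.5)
There is a genuine gap, and it sits in Step~1 rather than in the delicate Step~2. With $A\subseteq\pi^{-1}(I)$ and $B\subseteq\pi^{-1}(J)$ for arcs $I,J$, and fiber densities positive on $I$ and $J$, \emph{no} extension can make $\beta\setminus\alpha$ have positive upper density.

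Write $a=\mathbb E(\mathbf 1_A\mid\mathcal Z_\infty)$ and $b=\mathbb E(\mathbf 1_B\mid\mathcal Z_\infty)$ as functions on $\mathbb T$. Then $\beta(n)=G(n\theta)$ with $G(t)=\int a(z)b(z+t)\,dm(z)$ continuous, and $\{G>0\}$ is essentially the arc $J-I$, whose boundary is null. Note that $\overline{\{0<G<\delta\}}\subseteq\{0<G\le\delta\}\cup\partial\{G>0\}$. Also, $(n\theta)$ is equidistributed, so $\{n:n\theta\in F\}$ has upper density at most $m(F)$ for closed $F$. Together these show that $\{n:0<\beta(n)<\delta\}$ has upper density tending to $0$ as $\delta\to0$.

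Now suppose $E\subseteq\{\beta>0\}\cap\{\alpha=0\}$ had $\overline d(E)=d>0$. For small $\delta$, a subset of $E$ of upper density at least $d/2$ would satisfy $|\alpha(n)-\beta(n)|=\beta(n)\ge\delta$. This contradicts the Banach-null approximation you yourself invoke in your first paragraph.

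Your own bookkeeping exhibits the same failure. By unique ergodicity, $|E_{\delta_k}\cap[N_k,2N_k]|=O(\delta_kN_k)+o(N_k)$ wherever the block sits, so $\bigcup_k(E_{\delta_k}\cap[N_k,2N_k])$ has upper density $0$ once $\delta_k\to0$. With $\delta$ fixed, $\beta$ is not small on average on $E_\delta$. Thinning the fibers changes $G$ but not its positivity set, unless it changes the supports of $a$ and $b$ — and that is exactly the missing idea.

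What is needed is that the positivity set of the factor-level covariogram be an open set whose boundary has \emph{positive} measure. The orbit $n\theta$ must stay inside it while accumulating on the boundary along a positive-density set of times. The paper builds an open $O\subseteq\mathbb T$ with $U=O-O$ dense but $m(U)<1$, using small balls around a dense subset of the Cantor set $H$ with $H-H=\mathbb T$. Then $h(t)=m(O\cap(O-t))$ is positive exactly on $U$ and vanishes on the fat nowhere dense set $\mathbb T\setminus U$. A Baire category argument produces an irrational $\theta$ with $n\theta\in U$ for all $n$, and a set $E$ with $\overline d(E)>0$ and $\sum_{n\in E}h(n\theta)<\infty$.

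After that, no inductive cocycle or separate relative-weak-mixing verification is needed. Take the direct product with a Bernoulli shift, whose $\mathcal Z_\infty$ is the rotation factor. Set $B=\{(z,\omega):z\in O,\ \omega_0=0\}$ and $A=\{(z,\omega):z\in O,\ \omega_n=1\text{ for all }n\in E\text{ with }z+n\theta\in O\}$. Then $A\cap T^{-n}B=\emptyset$ for $n\in E$ by construction. Summability makes the number of $n\in E$ with $z,z+n\theta\in O$ finite for a.e.\ $z$, which gives $\mu(A)>0$ and $\beta(n)>0$ on $E$. That summability is also what your Step~2 leaves unaddressed. In such a construction it is what keeps $A$ of positive measure despite infinitely many exact disjointness constraints.
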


	As noted, this phenomenon occurs in the two-term correlation corresponding to the constant polynomial $p_1(n)=0$ and the linear polynomial $p_2(n)=n$. The construction utilizes an open set on the circle, whose covariogram vanishes on a prescribed set of return times, coupled with a Bernoulli extension. The independent Bernoulli coordinate ensures that the exact correlation vanishes at these times, while the pro-nilfactor correlation remains positive.

	Our second main result establishes a structural limitation on the counterexample provided in Theorem~\ref{thm:positive-density-counterexample}. Although the zero-threshold discrepancy set can have positive upper density, we prove that it is not piecewise syndetic.
	
	\begin{Maintheorem}
		\label{thm:non-piecewise-syndetic}
		Let $(X,\mu,T)$ be an ergodic system, and let $\mathcal Z_\infty$ be its $\infty$-step pro-nilfactor. Let $p_1,\ldots,p_d\in\mathcal P_0$ be distinct polynomials, and let $f_1,\ldots,f_d\in L^\infty(\mu)$ be non-negative functions. Then the zero-threshold discrepancy set
		\[
		\{n\in\mathbb Z:\beta(n)>0\} \setminus \{n\in\mathbb Z:\alpha(n)>0\}
		\]
		is not piecewise syndetic.
	\end{Maintheorem}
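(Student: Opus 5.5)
The plan is to reduce the zero-threshold statement to the positive-threshold lifting result \cite[Theorem 5.8]{GKLMMRR}. Write $D_0=\bigcup_{k\ge1}D_k$, where
\[
D_k:=\{n\in\mathbb Z:\beta(n)>1/k,\ \alpha(n)=0\}.
\]
Each $D_k$ is contained in $\{n:\beta(n)>1/k\}\setminus\{n:\alpha(n)>1/(2k)\}$, which has zero upper Banach density. A piecewise syndetic set has positive upper Banach density. So it suffices to show that if $D_0$ is piecewise syndetic, then some single $D_k$ is piecewise syndetic, or at least has positive upper Banach density. Piecewise syndeticity is partition regular only for finite partitions, and $D_0=\bigcup_k D_k$ is a countable increasing union. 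The whole difficulty is therefore to obtain a uniform lower bound for $\beta$ on a large piece of $D_0$.

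To get this uniformity I would use the structure of $\beta$. After approximating in $L^1$ by a finite-step factor and then by a nilsystem $(G/\Gamma,m,\tau)$, the functions $F_i=\mathbb E(f_i\mid\mathcal Z_\infty)$ become (limits of) $L^\infty$ functions on nilmanifolds. By Leibman's polynomial-orbit description, $\beta(n)=\Phi(g(n))$, where $g(n)$ is a polynomial sequence in a nilpotent group (the diagonal $(\tau^{p_1(n)},\dots,\tau^{p_d(n)})$ acting on a Leibman nilmanifold $Y$) and $\Phi(y)=\int F_1\otimes\cdots\otimes F_d\,d\lambda_y$. Because translation acts continuously on $L^1$, $\Phi$ is continuous and non-negative on the orbit closure. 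Hence
\[
\{n:\beta(n)>0\}=\{n:g(n)\in U\},\qquad U=\{\Phi>0\}\ \text{open},
\]
and $U=\bigcup_k U_k$ with $U_k=\{\Phi>1/k\}$ open and $\overline{U_{k}}\subseteq U_{k+1}$. The passage to the $\infty$-step factor is handled by an inverse-limit argument: at each finite level one gets a continuous $\Phi$, and the error is Banach-null, so it can be absorbed into the density-zero exceptional sets.

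The key step, and the one I expect to be the main obstacle, is a topological-dynamics lemma. If a piecewise syndetic set $P$ is contained in the visit set $N(U)=\{n:g(n)\in U\}$ of a polynomial nil-orbit, then for some compact $K\subseteq U$ (so some $U_k$) the set $P\cap N(U_k)$ still has positive upper Banach density. I would try to prove this as follows.
\begin{itemize}
\item Write $P\supseteq S\cap\bigcup_j I_j$, with $S$ syndetic with gap $L$ and $I_j$ intervals of length tending to infinity.
\item Pass to a limit of the shifted orbits $g(\cdot+t_j)$ along the left endpoints $t_j$ of the $I_j$. This uses that nil-orbit closures are distal and, after Leibman's equidistribution on subnilmanifolds, the shifted orbits are equidistributed on finitely many components.
\item This yields limit points $y$ whose orbits under the limiting polynomial action visit $U$ (in the closed form $\overline U$) syndetically.
\item Distality, via the orbit closure of $y$ being minimal, upgrades this to: the limit orbit meets $U$ in a set of positive density along which it lies in a fixed $U_k$.
\item Transferring back to the finite windows $I_j$ gives $\liminf_j |I_j\cap P\cap N(U_k)|/|I_j|>0$.
\end{itemize}
The delicate point is that visits to $U$ in the limit might only be visits to $\partial U$, where $\Phi=0$. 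I would rule this out using the exact equality $\alpha(n)=0$ on $P$ together with $\alpha-\beta$ Banach-null. This forces $\beta\to0$ in density along $P$, while openness of $U$ and equidistribution on the orbit closure give a definite proportion of times in each window at which $\Phi$ is bounded below.

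Once that lemma is in hand, the conclusion is immediate: $P=D_0$ gives some $D_k$ of positive upper Banach density, contradicting \cite[Theorem 5.8]{GKLMMRR}.
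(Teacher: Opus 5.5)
Your reduction is legitimate, and it is a genuinely different route from the paper's. Suppose every piecewise syndetic $P\subseteq\{n:\beta(n)>0\}$ contains a set of positive upper Banach density on which $\beta>\varepsilon$ for some fixed $\varepsilon>0$. Then $P=D_0$ is impossible, since $\{n\in D_0:\beta(n)>\varepsilon\}\subseteq\{n:|\alpha(n)-\beta(n)|>\varepsilon\}$ is Banach-null. (The paper instead applies \cite[Lemma~2.1]{GKLMMRR} to $R_\alpha\subseteq R_\beta\subseteq R_\alpha-\mathcal F$. It gets the finite-intersection property from Lemma~\ref{lem:syndetic-lifting}, via Bergelson--Leibman recurrence on $X^r$ and the nilsequence--null decomposition.)

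\textbf{The key lemma is not proved.} It carries the whole route, and your sketch does not establish it.
\begin{itemize}
\item A pointwise limit of $(\sigma^{t_j}\mathbf 1_P,g(\cdot+t_j))$ only sees visits to $\overline U$. It forgets that $\beta\to0$ in density along $P$, since the $t_j$ may lie among the rare times where $\beta$ is large.
\item ``The limit orbit meets a fixed $U_k$ with positive density'' is then the original problem again. Minimality alone does not give it.
\item Equidistribution does not help. It counts all $n$ in a window, not those in $P$. It also cannot rule out concentration on $\partial U$, because $\partial U$ can have positive Haar measure: in Section~\ref{sec:counterexample}, $\partial U=C$ with $m(C)>0$.
\end{itemize}

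The missing idea is an averaged limit combined with Baire category. Write $\beta(n)=\Phi(\tau^ny_0)$ with $\Phi$ continuous on the minimal orbit closure $Y$ of $y_0$ (justified below), and set $U=\{\Phi>0\}\subseteq Y$. Suppose $d^*(\{n\in P:\beta(n)>\varepsilon\})=0$ for all $\varepsilon>0$; this is automatic for $P=D_0$.
\begin{enumerate}
\item Choose $S$ with gaps at most $L$ and intervals $I_j$ with $|I_j|\to\infty$ and $S\cap I_j\subseteq P$.
\item Let $\sigma$ be the shift on $\{0,1\}^{\mathbb Z}$, and let $\mu$ be a weak-$*$ limit of $|I_j|^{-1}\sum_{n\in I_j}\delta_{(\sigma^n\mathbf 1_S,\tau^ny_0)}$.
\item Put $[1]:=\{z:z_0=1\}$. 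The portmanteau inequality on the open sets $[1]\times\{\Phi>\varepsilon\}$ and $[1]\times(Y\setminus\overline U)$ gives $\mu([1]\times(Y\setminus\partial U))=0$.
\item By invariance, some $(z,y)$ in the support has $z$ with gaps at most $L$ and $\tau^ny\in\partial U$ whenever $z_n=1$.
\item Minimality then forces $Y=\bigcup_{i<L}\tau^{-i}\partial U$. By Baire, $\partial U$ has nonempty interior, which is impossible for the boundary of an open set.
\end{enumerate}

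\textbf{The passage to $\mathcal Z_\infty$ is also mishandled.} You say this costs a Banach-null error that ``can be absorbed''. At the zero threshold that is false. A Banach-null perturbation destroys the exact inclusion $D_0\subseteq\{n:g(n)\in U\}$ on which everything rests; this is precisely the phenomenon of Theorem~\ref{thm:positive-density-counterexample}.

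What saves you is uniformity. Approximate each $F_i$ in $L^1$ by uniformly bounded continuous functions on nilsystem factors. This changes $\beta$ by an error that is uniform in $n$. Hence $\beta$ is a uniform limit of nilsequences $\Phi_k(\tau_k^ny_k)$, with polynomial orbits lifted to linear ones as in Leibman. Now work in the product $\prod_k Y_k$, which is distal, so its orbit closures are minimal. There, $\Phi_k$ composed with the $k$-th coordinate projection converges uniformly on the orbit closure of $(y_k)_k$ to a continuous $\Phi$ with $\beta(n)=\Phi(\tau^ny_0)$ exactly.

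With both repairs your route is a valid alternative. It avoids multiple recurrence and the combinatorial lemma. The price is that it needs an exact continuous model of $\beta$ along a minimal orbit, whereas the paper only ever works modulo Banach-null errors.
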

	
	Together, these theorems describe a dichotomy for the zero-threshold lifting problem: while positivity on the pro-nilfactor may fail to lift along a set of positive density, the corresponding exceptional set is not piecewise syndetic. {Note that Theorem \ref{thm:non-piecewise-syndetic} yields an ergodic-theoretic analogue to \cite[Theorem A]{GKLMMRR}.}

	\medskip

	\noindent\textbf{Organization of the paper.}
	The paper is organized as follows. Section~2 recalls the necessary background
	on measure-preserving systems, pro-nilfactors, nilsequences, and polynomial
	correlation sequences. Section~3 constructs the positive-density obstruction and
	proves Theorem~\ref{thm:positive-density-counterexample}. Section~4 proves the
	syndetic lifting lemma and deduces Theorem~\ref{thm:non-piecewise-syndetic}.
	
	\medskip
	
	\begin{ac}
		The authors would like to thank Professors Wen Huang and Song Shao for helpful discussions.
		This research was supported by the National Key Research and Development Program of China (Nos.~2024YFA1013601 and 2024YFA1013600), the University of Science and Technology of China Research Funds of the Double First-Class Initiative (YD0010002009), and the National Natural Science Foundation of China (Nos.~12031019, 12401243, 12426201, and 12471188).
	\end{ac}

	\section{Preliminaries}
	
	We collect standard notation and background results used throughout the paper. Let $\mathbb N$ and $\mathbb Z$ denote the sets of positive integers and integers, respectively. For any finite set $F$, $|F|$ denotes its cardinality.
	
	\subsection{Subsets of integers}
	
	Let $E\subseteq\mathbb Z$. The \emph{upper density} and \emph{upper Banach density} of $E$ are defined, respectively, as
	\[
	\overline d(E)
	:=
	\limsup_{N\to\infty}
	\frac{|E\cap[-N,N]|}{2N+1}
	\]
	and
	\[
	d^*(E)
	:=
	\limsup_{N-M\to\infty}
	\frac{|E\cap[M,N-1]|}{N-M}.
	\]
	By definition, we have $d^*(E)\ge \overline d(E)$.
	
	For a subset $A\subseteq\mathbb Z$ and an integer $t\in\mathbb Z$, we denote the shifted set by
	\[
	A-t:=\{n\in\mathbb Z:n+t\in A\}.
	\]
	
	A family $\mathcal F$ of subsets of $\mathbb Z$ is said to be \emph{upward-closed} if $F\in\mathcal F$ and $F\subseteq F'\subseteq\mathbb Z$ imply $F'\in\mathcal F$. Given a subset $A\subseteq\mathbb Z$ and a family $\mathcal F$, we define
	\[
	A-\mathcal F
	:=
	\{n\in\mathbb Z:A-n\in\mathcal F\}.
	\]
	This set consists of all integers that shift $A$ into the family $\mathcal F$.
	
	\medskip
	
	We recall standard notions of combinatorial largeness. A set $E\subseteq\mathbb Z$ is \emph{thick} if it contains arbitrarily long intervals of integers. It is \emph{syndetic} if it has bounded gaps; equivalently, there exists an integer $L\ge 1$ such that every interval of length $L$ intersects $E$. A set is \emph{piecewise syndetic} if it can be expressed as the intersection of a thick set and a syndetic set.
	
	\begin{lemma}\label{lem:syndetic-minus-null}
		If $S\subseteq\mathbb Z$ is syndetic and $E\subseteq\mathbb Z$ has zero upper Banach density, then the difference set $S\setminus E$ is syndetic.
	\end{lemma}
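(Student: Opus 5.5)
The plan is a direct counting argument on long intervals. Since $S$ is syndetic, I fix $L\ge 1$ such that every interval of $L$ consecutive integers meets $S$. Set $\varepsilon:=1/(2L)$. Because $d^*(E)=0$ and $d^*$ is defined as a $\limsup$ over $N-M\to\infty$, there is an $N_0$ such that
\[
|E\cap[M,N-1]|<\varepsilon (N-M)\qquad\text{whenever } N-M\ge N_0 .
\]
This holds uniformly over the position $M$ of the interval. I will use this uniform reading of upper Banach density explicitly: it is the only place where the hypothesis on $E$ enters, and it is exactly what the definition of $d^*$ provides.

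Next, set $K:=\lceil N_0/L\rceil$ and $L':=KL\ge N_0$. I claim that every interval $I=[M,M+L'-1]$ of length $L'$ meets $S\setminus E$, which shows that $S\setminus E$ is syndetic with gap bound $L'$. To see this, split $I$ into $K$ consecutive disjoint blocks of length $L$. Each block contains at least one element of $S$, so $|S\cap I|\ge K=L'/L$. On the other hand, since $|I|=L'\ge N_0$, we have $|E\cap I|<\varepsilon L'=L'/(2L)$. Hence
\[
|(S\setminus E)\cap I|\ \ge\ |S\cap I|-|E\cap I|\ >\ \frac{L'}{L}-\frac{L'}{2L}\ >\ 0,
\]
so $I$ meets $S\setminus E$, as required.

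There is no real obstacle here. The only point needing care is the choice of $\varepsilon$ strictly below $1/L$, so that the guaranteed number of elements of $S$ in a long interval beats the allowed number of elements of $E$.
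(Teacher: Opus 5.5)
Your proof is correct and is essentially the paper's argument. The paper argues by contradiction: it takes intervals $I_m$ of unbounded length that miss $S\setminus E$, and deduces $|I_m\cap E|\ge |I_m|/L-1$, hence $d^*(E)\ge 1/L$. You run the same block-counting directly, using the uniform form of $d^*(E)=0$, which also gives the explicit gap bound $L'=\lceil N_0/L\rceil L$.
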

	
	\begin{proof}
		Since $S$ is syndetic, there exists an integer $L\ge 1$ such that every interval of length $L$ intersects $S$. Suppose for contradiction that $S\setminus E$ is not syndetic. Then there exists a sequence of intervals $I_m\subseteq\mathbb Z$ with lengths $|I_m|\to\infty$ such that
		$		I_m\cap(S\setminus E)=\emptyset.$
		This disjointness implies that
		\[
		I_m\cap S\subseteq I_m\cap E.
		\]
		Partitioning $I_m$ into contiguous, disjoint subintervals of length $L$, and discarding at most $L-1$ residual points, yields
		\[
		|I_m\cap S|
		\ge
		\left\lfloor\frac{|I_m|}{L}\right\rfloor
		\ge
		\frac{|I_m|}{L}-1.
		\]
		Therefore, we obtain the density estimate
		\[
		\frac{|I_m\cap E|}{|I_m|}
		\ge
		\frac{|I_m\cap S|}{|I_m|}
		\ge
		\frac{1}{L}-\frac{1}{|I_m|}.
		\]
		Taking the limit supremum as $m\to\infty$, we deduce that $		d^*(E)\ge \frac{1}{L}>0,$
		contradicting the assumption that $d^*(E)=0$. Therefore, $S\setminus E$ is syndetic.
	\end{proof}
	
	\subsection{Measure-preserving systems}
	
	A \emph{measure-preserving system} $(X,\mathcal X,\mu,T)$ consists of a standard probability space $(X,\mathcal X,\mu)$ and an invertible measure-preserving transformation $T:X\to X$. We typically omit the $\sigma$-algebra $\mathcal X$ and write $(X,\mu,T)$.
	
	The invariant $\sigma$-algebra is denoted by $\mathcal I(T) = \{A\in\mathcal X:T^{-1}A=A\}$. The system is \emph{ergodic} if $\mathcal I(T)$ is trivial (meaning every invariant set has measure $0$ or $1$), and it is \emph{weakly mixing} if the product system $(X\times X,\mu\times\mu,T\times T)$ is ergodic.
	
	A \emph{factor} of $(X,\mu,T)$ is identified with a $T$-invariant sub-$\sigma$-algebra $\mathcal B$ of $\mathcal X$. For $f\in L^1(\mu)$, the conditional expectation of $f$ with respect to $\mathcal B$ is the unique $\mathcal B$-measurable function $\mathbb E(f\mid\mathcal B)$ satisfying
	\[
	\int_B f\,d\mu
	=
	\int_B \mathbb E(f\mid\mathcal B)\,d\mu,
	\qquad
	B\in\mathcal B.
	\]
	We identify factors with the corresponding invariant sub-$\sigma$-algebras and factor spaces when no ambiguity arises.

	\subsection{Nilsequences and Banach-null sequences}
	
	Let $G$ be a group. For $g,h\in G$, we denote the commutator by $[g,h]:=g^{-1}h^{-1}gh$. The lower central series of $G$ is defined inductively by $G_1:=G$ and $G_{j+1}:=[G_j,G]$ for $j\ge 1$. The group $G$ is $k$-step nilpotent if $G_{k+1}=\{e\}$.
	
	Let $G$ be a $k$-step nilpotent Lie group, and let $\Gamma\le G$ be a discrete cocompact subgroup.
	The compact homogeneous space $G/\Gamma$ is called a $k$-step \emph{nilmanifold}.
	If $a\in G$ and $R_a$ denotes the left translation defined by $R_a(g\Gamma):=ag\Gamma$,
	then $(G/\Gamma,R_a)$ is called a $k$-step \emph{nilsystem}.
	Viewed as a measure-preserving system, a nilsystem is canonically equipped with its Haar probability measure (see Parry \cite{Par69} for the ergodic properties of such transformations). A $k$-step \emph{pro-nilsystem} is an inverse limit of $k$-step nilsystems.
	
	\medskip
	
	A bounded sequence $\psi:\mathbb Z\to\mathbb C$ is called a \emph{nilsequence} if there exist a pro-nilsystem $(Y,R)$, a point $y_0\in Y$, and a continuous function $F\in C(Y)$ such that
	\[
	\psi(n)=F(R^ny_0),
	\qquad n\in\mathbb Z.
	\]
	If $Y$ can be chosen to be a $k$-step pro-nilsystem, $\psi$ is referred to as a $k$-step nilsequence.
	
	A bounded sequence $\lambda:\mathbb Z\to\mathbb C$ is called \emph{Banach-null} if
	\[
	\lim_{N-M\to\infty}
	\frac{1}{N-M}\sum_{n=M}^{N-1}|\lambda(n)|=0.
	\]
	Equivalently, for every $\varepsilon>0$, the upper Banach density satisfies
	\[
	d^*\bigl(\{n\in\mathbb Z:|\lambda(n)|>\varepsilon\}\bigr)=0.
	\]
	
	\begin{lemma}\label{lem:nilsequence-banach-null-algebra}
		Let $r\ge1$. Suppose that $a_j(n)=\psi_j(n)+\lambda_j(n)$ for each $j\in\{1,\ldots,r\}$, where $\psi_j$ is a nilsequence and $\lambda_j$ is Banach-null. Assume also that all sequences involved are bounded. Then there exist a nilsequence $\psi$ and a Banach-null sequence $\lambda$ such that
		\[
		\prod_{j=1}^{r}a_j(n)=\psi(n)+\lambda(n).
		\]
	\end{lemma}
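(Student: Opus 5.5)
We will argue by induction on $r$, so the whole matter reduces to the case $r=2$. The case $r=1$ is trivial. For the inductive step we write $\prod_{j=1}^{r}a_j=\bigl(\prod_{j=1}^{r-1}a_j\bigr)\cdot a_r$; the first factor is a bounded product of bounded sequences and, by the induction hypothesis, decomposes as nilsequence plus Banach-null. We therefore only need the following: if $a=\psi+\lambda$ and $b=\psi'+\lambda'$, with all four sequences bounded, then $ab$ decomposes the same way.

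Expanding gives
\[
ab=\psi\psi'+\bigl(\psi\lambda'+\lambda\psi'+\lambda\lambda'\bigr).
\]
The bracketed term is Banach-null. Let $C$ bound $|\psi|,|\psi'|,|\lambda|$. Then $|\psi\lambda'+\lambda\psi'+\lambda\lambda'|\le C|\lambda'|+C|\lambda|+C|\lambda'|$ pointwise. Averaging over $[M,N-1]$ and letting $N-M\to\infty$ gives $0$, since $\lambda$ and $\lambda'$ are Banach-null. Boundedness of the remainder is clear.

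The main point, and the only step needing care, is that $\psi\psi'$ is again a nilsequence. Write $\psi(n)=F(R^ny_0)$ on a pro-nilsystem $(Y,R)$ and $\psi'(n)=F'(S^nz_0)$ on a pro-nilsystem $(Z,S)$. Take the product $(Y\times Z,R\times S)$, the point $(y_0,z_0)$, and the continuous function $(y,z)\mapsto F(y)F'(z)$; then $\psi\psi'(n)=(FF')((R\times S)^n(y_0,z_0))$.

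It remains to check that $Y\times Z$ is a pro-nilsystem. If $Y=\varprojlim G_i/\Gamma_i$ and $Z=\varprojlim H_i/\Lambda_i$, then $Y\times Z=\varprojlim (G_i\times H_i)/(\Gamma_i\times\Lambda_i)$. Here $G_i\times H_i$ is a nilpotent Lie group whose step is the maximum of the two steps, $\Gamma_i\times\Lambda_i$ is discrete and cocompact, and the translation is by $(a_i,b_i)$. The two inverse systems can be indexed compatibly by passing to a common directed set, or by diagonal indexing over $\mathbb N$ since both are sequential limits.

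The definition in the paper does not require ergodicity or minimality of the pro-nilsystem, so no orbit-closure argument is needed; if it did, we would restrict to the orbit closure of $(y_0,z_0)$, which is again a (pro-)nilsystem by standard results (Leibman). This completes the induction.
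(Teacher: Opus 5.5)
Your proof is correct and follows essentially the same route as the paper: the main term $\prod_j\psi_j$ is a nilsequence, and every cross term in the expansion is a bounded sequence times a Banach-null one, hence Banach-null. The differences are cosmetic. You run an induction on $r$ instead of expanding the $r$-fold product at once, and you verify closure of nilsequences under products via the product pro-nilsystem $(Y\times Z,R\times S)$, a fact the paper simply asserts.
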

	
	\begin{proof}
		Set
		$   \psi(n):=\prod_{j=1}^{r}\psi_j(n).
		$
		Since nilsequences are closed under finite products, $\psi$ is a nilsequence.
		
		Expanding the product, we obtain
		\[
		\prod_{j=1}^{r}a_j(n)
		=
		\prod_{j=1}^{r}\psi_j(n)
		+
		\lambda(n),
		\]
		where $\lambda$ is a finite sum of terms of the form
		$   u(n)\lambda_j(n),
		$
		with $j\in\{1,\ldots,r\}$ and $u$ being a bounded sequence.
Indeed, every term other than $\prod_{j=1}^r\psi_j(n)$ contains at least one Banach-null factor $\lambda_j(n)$, and all remaining factors are bounded.
		
		It remains to show that each such term is Banach-null. Since $u$ is bounded, say $\|u\|_\infty\le C$, we have
		\[
		\frac{1}{N-M}\sum_{n=M}^{N-1}|u(n)\lambda_j(n)|
		\le
		C
		\frac{1}{N-M}\sum_{n=M}^{N-1}|\lambda_j(n)|.
		\]
		The right-hand side tends to $0$ as $N-M\to\infty$. Hence $u\lambda_j$ is Banach-null. Since a finite sum of Banach-null sequences is Banach-null, $\lambda$ is Banach-null. This completes the proof.
	\end{proof}

	\begin{lemma}\label{lem:nilsequence-positive-set-syndetic}
		Let $\psi:\mathbb Z\to\mathbb R$ be a nilsequence. If $a:= \sup_{n\in\mathbb Z}\psi(n)>0$, then for every $0<c<a$, the set $\{n\in\mathbb Z:\psi(n)>c\}$ is syndetic.
	\end{lemma}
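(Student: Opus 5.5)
We will derive the lemma from the uniform recurrence of orbits in minimal pro-nilsystems. First we reduce to the minimal case. Write $\psi(n)=F(R^ny_0)$ with $(Y,R)$ a pro-nilsystem and $F\in C(Y)$. Let $Y_0:=\overline{\{R^ny_0:n\in\mathbb Z\}}$ be the orbit closure of $y_0$. Orbit closures in nilsystems are minimal, by the Parry/Lesigne/Leibman theory of nilrotations; for an inverse limit this passes to the limit because each coordinate projection of $Y_0$ is an orbit closure in a nilsystem. So $(Y_0,R)$ is a minimal system, $y_0\in Y_0$, and $F|_{Y_0}$ is continuous. Replacing $Y$ by $Y_0$, we may therefore assume that $(Y,R)$ is minimal.

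Next we build an open set in which the orbit must land. Fix $0<c<a$. Since $a=\sup_n\psi(n)>c$, there is some $n_0$ with $\psi(n_0)=F(R^{n_0}y_0)>c$. Set
\[
U:=\{y\in Y:F(y)>c\}.
\]
This set is open because $F$ is continuous, and it is nonempty because it contains $R^{n_0}y_0$. With this notation,
\[
\{n\in\mathbb Z:\psi(n)>c\}=\{n\in\mathbb Z:R^ny_0\in U\}=:N(y_0,U).
\]

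Finally we show this return-time set is syndetic. By minimality, $\bigcup_{m\in\mathbb Z}R^{-m}U=Y$: the complement of this union is closed, $R$-invariant and proper, so it must be empty. Compactness of $Y$ then gives finitely many $m_1,\dots,m_L$ with $Y=\bigcup_{i=1}^L R^{-m_i}U$. Now take any $n\in\mathbb Z$. Then $R^ny_0\in R^{-m_i}U$ for some $i$, which means $n+m_i\in N(y_0,U)$. Hence every integer lies within $\max_i|m_i|$ of the set, so the set is syndetic.

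The main obstacle is the minimality of orbit closures in pro-nilsystems. It is standard for nilsystems; for inverse limits one notes that a point is uniformly recurrent precisely when its image in each finite-level factor is, or equivalently argues that distal systems are semisimple. Pro-nilsystems are distal, so every point is uniformly recurrent, and that is exactly what the last step uses. If the paper's notion of nilsequence allows non-minimal $Y$, citing distality in this way is the cleanest route.
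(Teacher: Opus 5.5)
Your proposal is correct and follows essentially the same route as the paper. Both pass to the minimal orbit closure, take the nonempty open set $\{F>c\}$, cover the orbit closure by finitely many translates $R^{-m_i}U$ using minimality and compactness, and conclude that the gaps are bounded by $\max_i|m_i|$. The only difference is that you justify minimality of orbit closures in pro-nilsystems (via the finite-level projections or via distality), whereas the paper simply asserts it.
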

	
	\begin{proof}
		By the definition of a nilsequence, there exist a pro-nilsystem $(Y,R)$, a point $y_0\in Y$, and a continuous function $\Phi\in C(Y,\mathbb R)$ such that
		\[
		\psi(n)=\Phi(R^ny_0),
		\qquad n\in\mathbb Z.
		\]
		Choose $n_0\in\mathbb Z$ satisfying $\psi(n_0)>c$, and set
		\[
		Z:=\overline{\{R^ny_0:n\in\mathbb Z\}}.
		\]
		The orbit closure of any point in a pro-nilsystem is minimal. Hence, the subsystem $(Z,R)$ is minimal.
		
		Let
		\(
		V:=\{y\in Z:\Phi(y)>c\}.
		\)
		Then $V$ is an open subset of $Z$, and it is non-empty since $R^{n_0}y_0\in V$. By minimality, the family $\{R^{-m}V:m\in\mathbb Z\}$ forms an open cover of $Z$. Since $Z$ is compact, we can extract a finite subcover, meaning there exist $m_1,\ldots,m_s\in\mathbb Z$ such that
		\[
		Z=\bigcup_{i=1}^s R^{-m_i}V.
		\]
		Let $M:=\max_{1\le i\le s}|m_i|$. For every $n\in\mathbb Z$, the point $R^ny_0 \in Z$ belongs to $R^{-m_i}V$ for some $1\le i\le s$. Hence,
		\[
		R^{n+m_i}y_0\in V.
		\]
		Since $|m_i|\le M$, this implies that
		\[
		[n-M,n+M]\cap\{m\in\mathbb Z:R^my_0\in V\}\ne\emptyset
		\]
		for every $n\in\mathbb Z$. Thus, the return-time set $\{m\in\mathbb Z:R^my_0\in V\}$ has bounded gaps and is therefore syndetic. This set is precisely $\{n\in\mathbb Z:\psi(n)>c\}$, which completes the proof.
	\end{proof}
	\subsection{Pro-nilfactors and polynomial correlation sequences}

We briefly recall the pro-nilfactors and the standard structural
results for polynomial multiple correlation sequences. The Host--Kra
seminorms are ergodic-theoretic analogues of the uniformity norms
introduced by Gowers \cite{Gow01} in additive combinatorics.

Let $(X,\mu,T)$ be an ergodic system. For each $k\ge 0$, the
$k$-step pro-nilfactor $\mathcal Z_k$ is characterized by the seminorm
$\|\cdot\|_{U^{k+1}}$: for every $f\in L^\infty(\mu)$,
\[
    \mathbb E(f\mid\mathcal Z_k)=0
    \quad\Longleftrightarrow\quad
    \|f\|_{U^{k+1}}=0.
\]
In particular, $\mathcal Z_0$ is the invariant factor, which is trivial
when the system is ergodic. By the Host--Kra, Ziegler structure theorem
\cite{HK05,TZ07}, each $\mathcal Z_k$, $k\ge 1$, is an inverse limit of
$k$-step nilsystems.
We define the $\infty$-step pro-nilfactor as $\mathcal Z_\infty := \bigvee_{k\ge1}\mathcal Z_k$.

The relevance of the pro-nilfactors in this paper is their role in
the structure theory of polynomial multiple correlation sequences. The
polynomial characteristic-factor theorem asserts that, for every finite
family of distinct integer polynomials vanishing at the origin, some
finite-step pro-nilfactor is characteristic for the associated polynomial
averages. Consequently, $\mathcal Z_\infty$ captures the structured
component of all polynomial correlation sequences considered below. In
the form used here, this means that replacing the functions by their
conditional expectations onto $\mathcal Z_\infty$ changes the
corresponding correlation sequence only by a Banach-null error, and that
polynomial correlation sequences admit nilsequence--null decompositions.

We record the following standard facts; see
\cite{BHK05,HK05B,Le05C,Le10,Le15}.

\begin{theorem}[Standard facts on polynomial correlation sequences]
\label{thm:structural-facts-polynomial-correlations}
Let $(X,\mu,T)$ be an ergodic system. Let
$q_1,\ldots,q_\ell\in\mathcal P_0$ be distinct polynomials, and let
$f_1,\ldots,f_\ell\in L^\infty(\mu)$. Then the following assertions hold.

\begin{enumerate}
    \item \emph{Banach-null characteristic approximation.}
    The difference sequence
    \[
        n\mapsto
        \int_X
        \prod_{i=1}^{\ell}T^{q_i(n)}f_i
        \,d\mu
        -
        \int_X
        \prod_{i=1}^{\ell}
        T^{q_i(n)}\mathbb E(f_i\mid\mathcal Z_\infty)
        \,d\mu
    \]
    is Banach-null.

    \item \emph{Nilsequence--null decomposition.}
    The polynomial multiple correlation sequence
    \[
        a(n)
        :=
        \int_X
        \prod_{i=1}^{\ell}T^{q_i(n)}f_i
        \,d\mu,
        \qquad n\in\mathbb Z,
    \]
    admits a decomposition
    \[
        a(n)=\psi(n)+\lambda(n),
    \]
    where $\psi$ is a nilsequence and $\lambda$ is Banach-null.
\end{enumerate}
\end{theorem}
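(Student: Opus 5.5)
Since this theorem collects results from the literature, the plan is to reduce each assertion to the cited characteristic-factor and nilsequence theorems rather than rebuild them. Throughout, $f_1,\ldots,f_\ell$ are bounded and $q_1,\ldots,q_\ell\in\mathcal P_0$ are distinct.

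\emph{Assertion (1).} First invoke the polynomial characteristic-factor theorem of Host--Kra \cite{HK05B} and Leibman \cite{Le05C}. It is proved by PET induction together with the van der Corput lemma, and its proof is uniform over arbitrary intervals $[M,N)$. It gives some $k=k(q_1,\ldots,q_\ell)$ and $C>0$ such that
\[
\limsup_{N-M\to\infty}\Bigl|\frac1{N-M}\sum_{n=M}^{N-1}\int_X\prod_i T^{q_i(n)}g_i\,d\mu\Bigr|\le C\min_i\|g_i\|_{U^{k}}
\]
whenever $\|g_i\|_\infty\le1$. The argument then has three steps.
\begin{enumerate}
\item Pass from signed averages to averages of absolute values. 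Set $c(n):=\int\prod_i T^{q_i(n)}g_i\,d\mu$. Then $|c(n)|^2$ is the analogous correlation in $(X\times X,\mu\times\mu,T\times T)$ for the functions $g_i\otimes\overline{g_i}$. Apply the same bound there; this product system may be non-ergodic, so use the ergodic decomposition or the non-ergodic form of the theorem. Combine this with $\|g\otimes\overline g\|_{U^{k}(\mu\times\mu)}\le\|g\|_{U^{k}(\mu)}^2$ and Cauchy--Schwarz. The conclusion is that $c$ is Banach-null as soon as some $\|g_i\|_{U^k}=0$.
\item Telescope. Write
\[
\prod_i T^{q_i(n)}f_i-\prod_i T^{q_i(n)}\mathbb E(f_i\mid\mathcal Z_\infty)
\]
as a sum of $\ell$ products in which exactly one factor is $f_j-\mathbb E(f_j\mid\mathcal Z_\infty)$. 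Each such factor is orthogonal to $\mathcal Z_{k-1}\subseteq\mathcal Z_\infty$, so its conditional expectation onto $\mathcal Z_{k-1}$ vanishes and hence $\|f_j-\mathbb E(f_j\mid\mathcal Z_\infty)\|_{U^k}=0$.
\item Conclude. By step 1 each telescoped term is Banach-null, and a finite sum of Banach-null sequences is Banach-null.
\end{enumerate}

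\emph{Assertion (2).} By (1), it suffices to decompose the correlation of $h_i:=\mathbb E(f_i\mid\mathcal Z_{k-1})$; the earlier telescoping shows that replacing $\mathcal Z_\infty$ by $\mathcal Z_{k-1}$ again costs only a Banach-null error. The proof has three steps.
\begin{enumerate}
\item Reduce to continuous functions on a nilsystem. Since $\mathcal Z_{k-1}$ is an inverse limit of nilsystems \cite{HK05,TZ07}, for each $\varepsilon>0$ choose a finite-step nilsystem factor $G/\Gamma$ and continuous $\phi_i$ on it with $\|h_i-\phi_i\circ\pi\|_{L^1}<\varepsilon$ and $\|\phi_i\|_\infty\le\|f_i\|_\infty$. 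Telescoping gives $\sup_n|a_h(n)-a_\phi(n)|\le C\varepsilon$.
\item Show that $a_\phi$ is a nilsequence. Write
\[
a_\phi(n)=\int_{G/\Gamma}\prod_i\phi_i(R_a^{q_i(n)}x)\,dx .
\]
Following Leibman \cite{Le10,Le15}, this is a nilsequence: $n\mapsto (a^{q_1(n)},\ldots,a^{q_\ell(n)})$ is a polynomial sequence in $G^\ell$. Its orbit on the diagonal-type nilmanifold, integrated against a continuous function, can be re-expressed as a basic nilsequence via the Leibman/Bergelson--Host--Kra lifting of polynomial sequences to linear ones on a larger nilmanifold \cite{BHK05}.
\item Pass to the limit in $\varepsilon$. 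This is the main obstacle: it yields only that $a$ lies in the uniform closure of nilsequences, up to Banach-null errors. Following \cite{BHK05,Le15}, I would take the Banach-null part to absorb the difference between $a$ and its \emph{structured part}, and note that uniform limits of nilsequences are nilsequences in the pro-nilsystem sense used here. Concretely, the $\phi$-approximants can be arranged along an inverse system of nilsystems whose inverse limit is a pro-nilsystem. On that pro-nilsystem a single continuous function realizes the limit, because the approximants are Cauchy in sup norm and all come from the same orbit point on the inverse limit.
\end{enumerate}
This yields $a=\psi+\lambda$ with $\psi$ a nilsequence and $\lambda$ Banach-null, as required.
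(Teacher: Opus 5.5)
Your overall route follows the standard argument behind the Host--Kra and Leibman results that the paper simply cites. It consists of a PET-type characteristic-factor bound, squaring through the product system, reduction to a finite-step pro-nilfactor, Leibman's theorem for continuous functions on a nilsystem, and a uniform-limit argument. However, one step is false as written, and it breaks your reduction in (2).

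The inequality $\|g\otimes\overline g\|_{U^{k}(\mu\times\mu)}\le\|g\|_{U^{k}(\mu)}^2$ does not hold. For ergodic $\mu$ the correct relation is $\|g\otimes\overline g\|_{U^{k}(\mu\times\mu)}=\|g\|_{U^{k+1}(\mu)}^2$. For $k=1$ both sides equal $\bigl(\lim_{N}\frac1N\sum_{n=1}^{N}|\langle T^ng,g\rangle|^2\bigr)^{1/2}$. So squaring costs one degree of uniformity.

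Concretely, let $T$ be the rotation $x\mapsto x+\theta$ on $\mathbb T$ with $\theta$ irrational, and take $\ell=2$, $q_1=0$, $q_2(n)=n$. Your bound holds with $k=1$, since the averages converge to $\int g_1\,d\mu\int g_2\,d\mu$. But $g_1=g_2=\cos(2\pi x)$ gives $\|g_2\|_{U^1}=0$, while $c(n)=\frac12\cos(2\pi n\theta)$ is not Banach-null. So the conclusion of your Step 1, that $c$ is Banach-null as soon as some $\|g_i\|_{U^k}=0$, fails. So does your claim in (2) that replacing $\mathcal Z_\infty$ by $\mathcal Z_{k-1}$ costs only a Banach-null error. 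Here $\mathcal Z_0$ is trivial, and $a(n)-\int f_1\,d\mu\int f_2\,d\mu=\frac12\cos(2\pi n\theta)$.

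The repair is an index shift. With the correct relation, Step 1 shows that $c$ is Banach-null whenever some $\|g_i\|_{U^{k+1}}=0$.

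Assertion (1) survives unchanged. The function $f_j-\mathbb E(f_j\mid\mathcal Z_\infty)$ is orthogonal to every $\mathcal Z_m$, so all its seminorms vanish.

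In (2) you must project onto $\mathcal Z_k$ rather than $\mathcal Z_{k-1}$. This is still a finite-step factor, so the nilsystem approximants $a_\phi$ have uniformly bounded step. Your limiting argument then goes through: take the orbit closure of the product point in the product of the representing nilsystems, and extend the sup-norm limit continuously. This yields a single nilsequence in the paper's sense. It is also why you should not run the approximation directly on $\mathcal Z_\infty$, where the step would be unbounded.
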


\begin{proof}
These are standard consequences of the Host--Kra--Leibman theory. The
first assertion is the Banach-null form of the characteristic-factor
approximation for polynomial multiple correlation sequences. The second
assertion is the nilsequence--null decomposition for polynomial multiple
correlation sequences.
For the linear case and its relation to nilsequences, see
\cite{BHK05}. The polynomial characteristic-factor theorem and the
nilsequence--null decomposition used here follow from
\cite{HK05B,Le05C,Le10,Le15}.
\end{proof}

	\subsection{The pro-nilfactor of the rotation--Bernoulli extension}

	In this subsection, we determine the $\infty$-step pro-nilfactor for the product of an irrational circle rotation and a Bernoulli shift. This system serves as the dynamical foundation for the counterexample constructed in Section~\ref{sec:counterexample}.

	\begin{lemma}\label{lem:rotation-bernoulli-pronilfactor}
		Let $R_\theta$ be an irrational rotation on $\mathbb T$, and let $(\Omega,\nu,S)$ be the two-sided Bernoulli shift. For the product system
		\[
		(X,\mu,T)
		:=
		(\mathbb T\times\Omega,\ m\times\nu,\ R_\theta\times S),
		\]
		one has $\mathcal Z_\infty = \mathcal B_{\mathbb T}\otimes\{\emptyset,\Omega\}$ modulo null sets.
	\end{lemma}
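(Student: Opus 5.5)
We prove the two inclusions separately. Throughout, write $\mathcal C:=\mathcal B_{\mathbb T}\otimes\{\emptyset,\Omega\}$ for the $\sigma$-algebra of sets depending only on the circle coordinate.

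\emph{Step 1: $\mathcal C\subseteq\mathcal Z_1\subseteq\mathcal Z_\infty$.} The product system is ergodic, since $R_\theta$ is ergodic and the Bernoulli shift is weakly mixing. Hence $\mathcal Z_1$ is the Kronecker factor, which is generated by the eigenfunctions of $T$. The functions $e_k(x,\omega):=e^{2\pi i k x}$ satisfy $e_k\circ T=e^{2\pi i k\theta}e_k$, so they are eigenfunctions. They generate $\mathcal C$, which gives $\mathcal C\subseteq \mathcal Z_1$. Alternatively, one can argue directly: $(\mathbb T,R_\theta)$ is a $1$-step nilsystem factor of $X$, and every nilsystem factor is contained in the corresponding pro-nilfactor.

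\emph{Step 2: $\mathcal Z_k\subseteq\mathcal C$ for every $k$.} It suffices to show that every $f\in L^\infty(\mu)$ with $\mathbb E(f\mid\mathcal C)=0$ satisfies $\|f\|_{U^{k+1}}=0$ for all $k$. This implies $f\perp L^2(\mathcal Z_k)$, so $L^2(\mathcal Z_k)\subseteq L^2(\mathcal C)$, and taking the join over $k$ gives $\mathcal Z_\infty\subseteq\mathcal C$.

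By density and the triangle inequality for the seminorm, bounded by $\|\cdot\|_{L^{2^{k+1}}}$, we may reduce to finite sums of products $g(x)h(\omega)$. The condition $\mathbb E(f\mid\mathcal C)=0$ allows us to take each $h$ with $\int h\,d\nu=0$; this is just the orthogonal decomposition $L^2(m\times\nu)=L^2(m)\otimes(\mathbb C\oplus L^2_0(\nu))$. For a product function, the Host--Kra seminorm factorizes:
\[
\|g\otimes h\|_{U^{k+1}(X)}=\|g\|_{U^{k+1}(\mathbb T)}\,\|h\|_{U^{k+1}(\Omega)}.
\]
This holds because the cube measures of a product system are products of the cube measures of the factors; equivalently, one can use the inductive definition through ergodic averages, which factor over the product. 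The Bernoulli shift is weakly mixing, so its $\mathcal Z_k$ is trivial and $\|h\|_{U^{k+1}}=0$ whenever $\int h=0$. Thus each such product has seminorm zero. Since $\|\cdot\|_{U^{k+1}}$ is a seminorm and continuous in $L^{2^{k+1}}$, approximation gives $\|f\|_{U^{k+1}}=0$.

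\emph{The main obstacle} is making the approximation in Step 2 rigorous while keeping the mean-zero condition. The plan is to approximate $f$ in $L^{2^{k+1}}$ by functions of the form $\sum_j g_j\otimes h_j$ with $h_j$ bounded, and then replace each $h_j$ by $h_j-\int h_j$. Doing this changes the approximant by $\mathbb E(\cdot\mid\mathcal C)$ of the approximant. That change is small, because conditional expectation is an $L^p$-contraction and $\mathbb E(f\mid\mathcal C)=0$.

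A cleaner route avoids approximation altogether: use that $\mathcal Z_k(X\times Y)=\mathcal Z_k(X)\otimes\mathcal Z_k(Y)$ for ergodic products, which follows from the cube-measure factorization, together with $\mathcal Z_k(\Omega)$ trivial and $\mathcal Z_k(\mathbb T)=\mathcal B_{\mathbb T}$. I would present this cube-measure route if the factorization is accepted as standard, and the approximation argument otherwise.
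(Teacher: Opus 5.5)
Your proof is correct, but it takes a different route from the paper.

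\textbf{How the paper argues.} The paper never uses seminorms. It observes that the relative product of $X$ over $\mathbb T$ is $(\mathbb T\times\Omega\times\Omega,\ R_\theta\times S\times S)$. This system is ergodic, so $X\to\mathbb T$ is relatively weakly mixing. The rotation factor sits inside $\mathcal Z_\infty$, and $\mathcal Z_\infty$ is distal. The relative Furstenberg--Zimmer theorem says a relatively weakly mixing extension has no nontrivial relatively distal intermediate factor, which forces $\mathcal Z_\infty$ to equal the rotation factor. This is shorter and in fact shows that $\mathbb T$ is the maximal distal factor of $X$. The cost is reliance on the full structure theory, and on the unstated fact that $\mathcal Z_\infty\to\mathbb T$ is relatively distal.

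\textbf{How you argue.} You work directly from the characterization $\mathbb E(f\mid\mathcal Z_k)=0\iff\|f\|_{U^{k+1}}=0$, together with the bound $\|\cdot\|_{U^{k+1}}\le\|\cdot\|_{L^{2^{k+1}}}$ and a density argument. Your mean-zero correction, replacing an approximant $\phi$ by $\phi-\mathbb E(\phi\mid\mathcal C)$ at the cost of a factor $2$ in the error, is fine. This route stays inside the Host--Kra framework and computes the seminorms of tensor products explicitly.

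\textbf{What to tighten.} The identity $\|g\otimes h\|_{U^{k+1}}=\|g\|_{U^{k+1}}\|h\|_{U^{k+1}}$ does not follow from ``ergodic averages factor over the product.'' Cesàro averages of a product $a_nb_n$ do not split in general. What makes the identity hold here is weak mixing of $S$, and you should say so. The argument is an induction on cube measures:
\begin{itemize}
\item Assume $(m\times\nu)^{[k]}=m^{[k]}\times\nu^{\otimes 2^k}$.
\item Since $S^{\times 2^k}$ is weakly mixing, every invariant function of the product of the circle cube system with $(\Omega^{2^k},\nu^{\otimes 2^k},S^{\times 2^k})$ depends only on the circle cube coordinates.
\item Hence the relatively independent self-joining over the invariant $\sigma$-algebra splits again, giving $m^{[k+1]}\times\nu^{\otimes 2^{k+1}}$.
\end{itemize}
Equivalently, in the inductive formula for the seminorm, the $\Omega$-factor $|\langle h,S^nh\rangle|^{2^k}$ converges in density to $|\int h\,d\nu|^{2^{k+1}}$, and this is what lets the average split. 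With that justification stated, your proof is complete.

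Your ``cleaner route'' asserts $\mathcal Z_k(X\times Y)=\mathcal Z_k(X)\otimes\mathcal Z_k(Y)$ for arbitrary ergodic products. That is more than you need and should not be asserted without proof. Ergodic components of cube measures can acquire new eigenvalues, so the general case is not immediate. The weakly mixing case is the one that is straightforward, and it is all the lemma requires.
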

	
	\begin{proof}
		The extension $X\to\mathbb T$ given by the coordinate projection is relatively weakly mixing. Indeed, its relative product over $\mathbb T$ is isomorphic to the system $\mathbb T\times\Omega\times\Omega$ equipped with the transformation $R_\theta\times S\times S$. Since the Bernoulli shift is weakly mixing, the product $S\times S$ is ergodic. The product of an ergodic system with a weakly mixing system is ergodic, implying that $R_\theta\times S\times S$ is ergodic. Thus, the extension $X\to\mathbb T$ is relatively weakly mixing.
		
		The circle rotation factor $\mathcal B_{\mathbb T}\otimes\{\emptyset,\Omega\}$ is contained in $\mathcal Z_\infty$ because an irrational rotation is a $1$-step nilsystem.
		
		Since $\mathcal Z_\infty$ is an inverse limit of finite-step nilsystems, it is a distal factor. By the relative Furstenberg--Zimmer structure theorem \cite{F77}, a relatively weakly mixing extension admits no non-trivial relatively distal intermediate factor. Therefore, the distal intermediate factor $\mathcal Z_\infty$ between the rotation factor and $X$ must coincide with the rotation factor. This yields $\mathcal Z_\infty = \mathcal B_{\mathbb T}\otimes\{\emptyset,\Omega\}$ modulo null sets.
	\end{proof}

	\section{Proof of Theorem~\ref{thm:positive-density-counterexample}}\label{sec:counterexample}
	
	We now construct a zero-threshold discrepancy set of positive upper density. The argument consists of two main components. First, we construct an open subset of the circle whose covariogram is positive on a set of return times with positive upper density, yet remains summable along that set. Second, we take an extension by an independent Bernoulli coordinate, ensuring that the exact correlations vanish at these specific times while the pro-nilfactor remains equal to the circle rotation factor.
	
	\subsection{A circle covariogram with a large zero set}
	
	Let $\mathbb T=\mathbb R/\mathbb Z$, and let $m$ denote the Lebesgue measure on $\mathbb T$. For a subset $A\subseteq\mathbb T$ and $\rho>0$, define the open $\rho$-neighborhood of $A$ by
	\[
	A_\rho:=\{x\in\mathbb T:\operatorname{dist}_{\mathbb T}(x,A)<\rho\}.
	\]
	
	\begin{lemma}\label{lem:open-set}
		There exists an open set $O\subseteq\mathbb T$ such that $O-O$ is dense and open in $\mathbb T$, but $m(O-O)<1$. Consequently, if $U:=O-O$ and $C:=\mathbb T\setminus U$, then $C$ is closed and has positive Lebesgue measure.
	\end{lemma}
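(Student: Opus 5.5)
We will take $O$ to be a union of small open intervals centred at a sequence of points whose pairwise differences are dense, with lengths shrinking fast enough that the total measure of $O-O$ stays small. Fix an enumeration $(r_j)_{j\ge1}$ of the rational points of $\mathbb T$ (any countable dense set will do), and a parameter $\eta\in(0,1/4)$. Choose radii $\rho_j>0$ with $\sum_{j\ge1}\rho_j$ finite and small, say $\sum_j \rho_j<\eta/8$. Let $I_0:=(-\rho_0,\rho_0)$ be a small interval around $0$ and $I_j:=(r_j-\rho_j,r_j+\rho_j)$ for $j\ge1$. Set
\[
O:=I_0\cup\bigcup_{j\ge1}I_j .
\]
Then $O$ is open as a union of open intervals.

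First, $O-O$ is open, since it equals $\bigcup_{x\in O}(x-O)$, a union of translates of the open set $-O$. Second, it is dense: $O-O\supseteq I_j-I_0\ni r_j$ for every $j$, so $O-O$ contains the dense set $\{r_j\}$.

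The only real step is the measure bound. We write
\[
O-O=\bigcup_{i,j\ge0}(I_i-I_j),
\]
where $I_i-I_j$ is an interval of length $2(\rho_i+\rho_j)$. Summing over all pairs $(i,j)$ diverges, so the crude union bound fails. This is the main obstacle, and it is exactly why the centres cannot be chosen freely with arbitrary radii.

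To handle it, we make the density come only from differences with the single interval $I_0$, and make every other difference set tiny by choosing the radii inductively and very rapidly decreasing. Concretely, we require
\[
\rho_j\le \eta\,2^{-j}\Big/\Big(8\,(j+1)\Big)
\quad\text{and}\quad \rho_0\le \eta/8 .
\]
Then
\[
m(I_i-I_j)\le 2(\rho_i+\rho_j)\le 4\rho_{\min(i,j)} .
\]
This still sums badly over pairs, so we refine the accounting: for $i\ne j$ we bound $m(I_i-I_j)\le 4\max(\rho_i,\rho_j)$, and group the pairs according to the larger index $k=\max(i,j)$. There are at most $2k+1$ such pairs, each contributing at most $2(\rho_i+\rho_j)\le 2\rho_{\min}+2\rho_k$. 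The terms $\rho_{\min}$ are the problem, since the smaller-index radius is large.

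The fix is to choose the radii inductively. At stage $k$ the set
\[
O_{k-1}-O_{k-1},\qquad O_{k-1}:=I_0\cup\cdots\cup I_{k-1},
\]
is a finite union of intervals. Adding $I_k$ contributes the new set $(I_k-O_{k-1})\cup(O_{k-1}-I_k)\cup(I_k-I_k)$. As $\rho_k\to0$, this new set shrinks to $(r_k-\overline{O_{k-1}})\cup(\overline{O_{k-1}}-r_k)\cup\{0\}$, whose measure is at most $2\,m(O_{k-1})$ but is not small.

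So instead we keep $m(O)$ itself small and use the bound $m(O-O)\le$ something controlled. Here we must be careful, since $O-O$ can have large measure even when $m(O)$ is small. We therefore choose the centres more cleverly: we take them inside a fixed closed null set $K$ with $K-K$ of measure less than $1$ but containing a dense countable set. One example is $K=\{0\}\cup\{r_j\}$, which is countable.

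With this choice, $O-O$ decreases to $K-K$, which is countable, as the radii shrink. By continuity from above along finite stages, plus a diagonal choice $\rho_k$ small enough that
\[
m\bigl(O_k-O_k\bigr)\le m\bigl(O_{k-1}-O_{k-1}\bigr)+\eta 2^{-k},
\]
we get $m(O-O)\le \eta+\sum_k\eta2^{-k}<1$. This inductive estimate works because the distances involved are not fixed: when $I_k$ is added, we are also free to shrink the earlier intervals. So the cleanest scheme is to run the construction with all radii at stage $k$ depending on $k$, and to pass to a union of nested-in-index choices. I expect verifying this limiting measure bound, via monotone limits of finite unions of intervals, to be the delicate point.

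Finally, $C=\mathbb T\setminus U$ is closed because $U$ is open, and $m(C)=1-m(U)>0$.
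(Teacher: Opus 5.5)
Your construction has a genuine gap, and the problem goes beyond the crude union bound: with the centres you chose, $m(O-O)=1$ however the radii are picked. If an open set $O$ contains a dense subset of $\mathbb T$, then $O-O=\mathbb T$, because for any $t$ the nonempty open set $O+t$ meets $O$. Concretely, $O-O\supseteq\bigcup_j (I_j-I_0)\supseteq\bigcup_j(r_j-\rho_0,r_j+\rho_0)=\mathbb T$.

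The proposed repair does not help, for three reasons. First, a closed $K$ whose difference set contains a dense set has $K-K$ compact and dense, hence $K-K=\mathbb T$, so your requirement $m(K-K)<1$ is contradictory. Second, your example $K=\{0\}\cup\{r_j\}$ is not closed; its closure is $\mathbb T$. Third, the assertion that $O-O$ decreases to $K-K$ as the radii shrink is false, since $O-O=\mathbb T$ for every choice of positive radii. The stage-by-stage bound $m(O_k-O_k)\le m(O_{k-1}-O_{k-1})+\eta2^{-k}$ is also unsupported. Your own computation shows the new contribution tends to a set of measure comparable to $m(O_{k-1})$. Finally, ``shrinking the earlier intervals'' at every stage does not produce a well-defined open set.

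The missing idea is to place the centres in a fixed \emph{compact} null set $H$ with $H-H=\mathbb T$, such as the image of the middle-thirds Cantor set, and take them to be a countable dense subset $\{a_i\}$ of $H$. No countable closed set can serve here, since its difference set is countable and compact, hence not dense.

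With this choice, $\{a_i\}-\{a_i\}$ is dense in $H-H=\mathbb T$, while $O$ itself is far from dense. Compactness and nullness give $m((a_i-H)_\rho)\to0$ as $\rho\to0$. So you can choose non-increasing radii $r_i$ with $m((a_i-H)_{2r_i})+m((H-a_i)_{2r_i})<\varepsilon_i$ and $\sum_i\varepsilon_i<1$.

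The key accounting step is to charge each difference to the \emph{smaller} index. If $x\in B(a_i,r_i)$, $y\in B(a_j,r_j)$ and $i\le j$, then $x-y$ lies within $r_i+r_j\le 2r_i$ of $a_i-a_j\in a_i-H$, so $x-y\in(a_i-H)_{2r_i}$; the case $j<i$ is symmetric. Thus the budget fixed at stage $i$ already covers all later differences, because every later centre lies in $H$. This is exactly what your greedy scheme lacks. Note that what must be small is the neighbourhood of each single translate $a_i-H$, not $H-H$, which is all of $\mathbb T$.
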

	
	\begin{proof}
		Let $H\subseteq\mathbb T$ be the image of the classical middle-thirds Cantor set under the quotient map $[0,1]\to\mathbb T$.
		Then $H$ is compact, $m(H)=0$, and $H-H=\mathbb T$.
		(This follows from the fact that the middle-thirds Cantor set $K\subset[0,1]$ satisfies $K-K=[-1,1]$, whose image modulo $1$ is $\mathbb T$.)
		Choose a countable dense subset $D=\{a_1,a_2,\ldots\}\subseteq H$. For each $i$, the sets $a_i-H$ and $H-a_i$ are compact null subsets of $\mathbb T$. Choose a sequence of positive numbers $\varepsilon_i$ such that $\sum_{i=1}^\infty \varepsilon_i<1$.
		
		The Lebesgue measure of the $\rho$-neighborhood of any compact null set tends to $0$ as $\rho\to0$. Thus, we may choose $r_i>0$ inductively such that
		\[
		m((a_i-H)_{2r_i})+m((H-a_i)_{2r_i})<\varepsilon_i,
		\]
		and $r_i<\min\{2^{-i}, r_{i-1}\}$ for $i\ge2$. In particular, $r_i\to0$.
		
		Define the open set
		\[
		O:=\bigcup_{i=1}^{\infty}B(a_i,r_i).
		\]
		Since $D\subseteq O$ and $D$ is dense in $H$, the difference set $O-O$ contains $D-D$, which is dense in $H-H=\mathbb T$. Hence, $O-O$ is dense. It is also open, since
		\[
		O-O=\bigcup_{y\in O}(O-y).
		\]
		
		To estimate the measure of $O-O$, let $x\in B(a_i,r_i)$ and $y\in B(a_j,r_j)$. If $i\le j$, then $r_j\le r_i$, implying $x-y\in(a_i-H)_{2r_i}$. If $j<i$, then similarly $x-y\in(H-a_j)_{2r_j}$. Therefore, we obtain the inclusion
		\[
		O-O
		\subseteq
		\bigcup_{i=1}^{\infty}(a_i-H)_{2r_i}
		\cup
		\bigcup_{i=1}^{\infty}(H-a_i)_{2r_i}.
		\]
		By countable subadditivity,
		\begin{align*}
			m(O-O)
			&\le
			\sum_{i=1}^{\infty}
			\Bigl(m((a_i-H)_{2r_i})+m((H-a_i)_{2r_i})\Bigr) \\
			&<\sum_{i=1}^{\infty}\varepsilon_i<1.
		\end{align*}
		This completes the proof.
	\end{proof}
	
	Fix such an open set $O$ for the remainder of this section. Let
	\[
	U:=O-O,
	\qquad
	C:=\mathbb T\setminus U,
	\qquad
	c:=m(C)>0.
	\]
	Define the covariogram $h:\mathbb T\to\mathbb R$ by
	\[
	h(t):=m(O\cap(O-t)).
	\]
	The function $h$ is continuous by the continuity of translations in $L^1(\mathbb T)$. Moreover,
	\[
	h(t)>0
	\quad\iff\quad
	O\cap(O-t)\ne\emptyset
	\quad\iff\quad
	t\in U.
	\]
	Indeed, if $O\cap(O-t)$ is non-empty, it is a non-empty open subset of $\mathbb T$ and thus has positive Lebesgue measure. Consequently, $h$ vanishes identically on $C$. We note that $U$ is symmetric and $h$ is even, meaning $h(-t)=h(t)$ for all $t\in\mathbb T$.
	
	\subsection{A positive-density set of small covariogram values}
	
	We record an elementary averaging estimate.
	
	\begin{lemma}\label{lem:interval-estimate-unified}
		Let $I\subseteq\mathbb T$ be a non-empty open arc. Then, for every measurable set $W\subseteq\mathbb T$ and every integer $n\ge1$,
		\[
		\int_I \mathbf 1_W(n\theta)\,dm(\theta)
		\ge
		m(I)m(W)-\frac{2}{n}.
		\]
	\end{lemma}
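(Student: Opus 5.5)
Since $n\theta$ only depends on $\theta$ through the map $\theta\mapsto n\theta$, I will write the arc as $I=(a,a+\ell)$ with $\ell=m(I)\in(0,1]$ and use the substitution $s=n\theta$. The map $\theta\mapsto n\theta$ on $\mathbb T$ is $n$-to-$1$ and pushes Lebesgue measure on any interval of length exactly $1/n$ forward to $m$ on $\mathbb T$. Hence, for any interval $J$ of length $1/n$,
\[
\int_J \mathbf 1_W(n\theta)\,dm(\theta)=\frac{1}{n}\,m(W).
\]
This identity is the only structural input.

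Next I will decompose $I$ into $k:=\lfloor n\ell\rfloor$ consecutive disjoint subintervals of length $1/n$, plus a remainder of length $\ell-k/n<1/n$. Since the integrand is non-negative, I discard the remainder and obtain
\[
\int_I \mathbf 1_W(n\theta)\,dm(\theta)\ \ge\ \frac{k}{n}\,m(W)\ \ge\ \Bigl(\ell-\frac1n\Bigr)m(W)\ \ge\ m(I)m(W)-\frac1n,
\]
using $k\ge n\ell-1$ and $m(W)\le1$. This already gives the claimed bound with $1/n$ in place of $2/n$, so the stated estimate follows a fortiori.

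The case $I=\mathbb T$ (so $\ell=1$, $k=n$) is handled by the same computation, giving equality $m(W)$. No step is a genuine obstacle. The only point requiring care is the pushforward identity on an interval of length $1/n$: it holds because $\theta\mapsto n\theta$ restricted to such an interval is an affine bijection onto $\mathbb T$ (up to an endpoint) with constant Jacobian $n$. The slack in the constant $2/n$ would also absorb a cruder count, for instance using $\lceil\cdot\rceil$ at both ends of the arc.
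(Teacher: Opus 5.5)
Your argument is correct and is essentially the paper's proof: both use the change of variables $t=n\theta$ (equivalently, that each full period of $\theta\mapsto n\theta$ contributes exactly $m(W)/n$), tile by full periods, and discard the leftover piece using non-negativity of the integrand. The only difference is that you anchor the tiling at the left endpoint of the arc, whereas the paper uses unit intervals with integer endpoints inside $(na,nb)$ and so can lose a partial period at each end; this is why you get the sharper error $1/n$ instead of the stated $2/n$.
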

	
	\begin{proof}
		Choose an interval $(a,b)\subseteq\mathbb R$ that projects bijectively onto $I$, so $b-a=m(I)$. We regard $\mathbf 1_W$ as a $1$-periodic function on $\mathbb R$. Using the change of variables $t=n\theta$, we have
		\[
		\int_I \mathbf 1_W(n\theta)\,dm(\theta)
		=
		\frac1n\int_{na}^{nb}\mathbf 1_W(t)\,dt.
		\]
		Let $J=(na,nb)$, and let $J'$ be the union of all unit intervals with integer endpoints that are fully contained in $J$. Then $J'\subseteq J$ and $|J\setminus J'|<2$. Since $\mathbf 1_W$ is $1$-periodic, its integral over each such unit interval is precisely $m(W)$. Thus,
		\begin{align*}
			\int_{na}^{nb}\mathbf 1_W(t)\,dt
			&\ge
			\int_{J'}\mathbf 1_W(t)\,dt \\
			&= |J'|m(W) \\
			&\ge (nm(I)-2)m(W) \\
			&\ge nm(I)m(W)-2.
		\end{align*}
		Dividing by $n$ yields the desired estimate.
	\end{proof}
	
	\begin{lemma}\label{lem:unified-E} {Let $U$ and $h$ as defined before.}
		There exist an irrational $\theta\in\mathbb T$ and a set $E\subseteq\mathbb Z$ such that $n\theta\in U$ for every $n\in E$, $\sum_{n\in E}h(n\theta)<\infty$, and $\overline d(E)>0$. In particular, $h(n\theta)>0$ for every $n\in E$.
	\end{lemma}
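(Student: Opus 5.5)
The plan is to build $\theta$ as the intersection of a nested sequence of closed arcs $I_0\supseteq I_1\supseteq\cdots$, and $E$ as a union of finite blocks $F_k\subseteq(N_{k-1},N_k]$. At stage $k$ we make sure that every $\theta\in I_k$ satisfies $n\theta\in U$ and $h(n\theta)\le 2^{-k}/N_k$ for all $n\in F_k$, and that $|F_k|\ge \tfrac c4 N_k$.

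The key observation is that $h$ is uniformly continuous on $\mathbb T$ and vanishes on $C$. Let $\omega$ be its modulus of continuity. Then $h\le\omega(\rho)$ on the open neighbourhood $C_\rho$, and $m(C_\rho)\ge m(C)=c$ for every $\rho>0$.

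Stage $k$ runs as follows, given the arc $I_{k-1}$ (start with $I_0=\mathbb T$ and $N_0=1$).

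\emph{Choosing parameters.} Choose $N_k$ so large that $N_{k-1}/N_k<c/8$ and $2/N_{k-1}$ is negligible compared with $m(I_{k-1})c$; enlarging $N_{k-1}$ beforehand if needed, we may simply require $n>N_{k-1}$ large. Then choose $\rho_k>0$ with $\omega(\rho_k)\le 2^{-k}/N_k$.

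\emph{Finding a good point.} Apply Lemma~\ref{lem:interval-estimate-unified} with $I=\operatorname{int}I_{k-1}$ and $W=C_{\rho_k}$ to each $n\in(N_{k-1},N_k]$, and sum over $n$. By Fubini, some $\theta_k\in I$ satisfies
\[
\#\{n\in(N_{k-1},N_k]:n\theta_k\in C_{\rho_k}\}\ \ge\ \tfrac c2 (N_k-N_{k-1})\ \ge\ \tfrac c4 N_k .
\]
Let $F_k$ be this set of $n$.

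\emph{Opening up the condition.} Since $C_{\rho_k}$ is open and $F_k$ is finite, there is an open arc $J\ni\theta_k$, $J\subseteq I$, on which $n\theta\in C_{\rho_k}$ for all $n\in F_k$.

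\emph{Forcing $n\theta\in U$.} For $n\ne0$ the map $\theta\mapsto n\theta$ is open and continuous, so the preimage of the dense open set $U$ is dense and open. Hence
\[
J\cap\bigcap_{n\in F_k}\{\theta: n\theta\in U\}
\]
is a finite intersection of dense open sets with the nonempty open arc $J$, and is therefore a nonempty open set.

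\emph{Shrinking.} Pick a closed nondegenerate arc $I_k$ inside this open set. Also arrange that $I_k$ avoids the $k$-th rational in a fixed enumeration of $\mathbb Q/\mathbb Z$, which is possible since we only remove a point. Finally make the length of $I_k$ tiny.

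Now let $\theta$ be the unique point of $\bigcap_k I_k$, and put $E=\bigcup_k F_k$. By construction $\theta$ is irrational and $n\theta\in U$ for all $n\in E$, so $h(n\theta)>0$ there. Moreover
\[
\sum_{n\in E}h(n\theta)\le\sum_k |F_k|\cdot 2^{-k}/N_k\le\sum_k 2^{-k}<\infty,
\]
and $|E\cap[-N_k,N_k]|/(2N_k+1)\ge c/12$ for every $k$, so $\overline d(E)\ge c/12>0$.

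The delicate point is making the two requirements compatible. The set $\{h<\delta\}\cap U$ has measure tending to $0$ as $\delta\to0$, so we cannot ask for positive density of $n$ with $n\theta$ in it directly. This is handled by separating them. We use the measure-theoretic estimate of Lemma~\ref{lem:interval-estimate-unified} only for the fat open set $C_{\rho_k}$, which gives smallness of $h$. We then use density of $U$ together with a Baire-type finite intersection inside a small arc to get $n\theta\in U$, which costs nothing in measure because only finitely many $n$ are involved at each stage.
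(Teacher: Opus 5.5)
Your proof is correct and is essentially the paper's argument with the Baire category theorem unrolled into an explicit nested-arc construction. You use disjoint blocks $F_k$ and the neighbourhoods $C_{\rho_k}$ where the paper uses sublevel sets $\{h<2^{-j}/N\}$, overlapping initial segments $[1,N_j]$, and a generic $\theta$. One step is loosely worded: $N_{k-1}$ and $I_{k-1}$ are fixed before stage $k$, so you cannot arrange $2/N_{k-1}\ll m(I_{k-1})c$ by choosing $N_k$. You do not need to, because the error $\sum_{N_{k-1}<n\le N_k}2/n=O(\log N_k)=o(N_k)$ is absorbed once $N_k$ is large, exactly as in the paper.
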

	
	\begin{proof}
		For integers $j,N\ge 1$, define the measurable sets
		\[
		W_{j,N}:=\left\{t\in\mathbb T:h(t)<\frac{2^{-j}}{N}\right\}.
		\]
		Since $h$ vanishes on $C$, we have $C\subseteq W_{j,N}$, which implies $m(W_{j,N})\ge m(C)=c$. Next, define
		\[
		\mathcal A_{j,N}:=
		\left\{\theta\in\mathbb T:
		\bigl|\{1\le n\le N:n\theta\in W_{j,N}\}\bigr|
		\ge \frac{c}{2} N
		\right\}.
		\]
		Each set $\mathcal A_{j,N}$ is open, as it can be expressed as a finite union of open intersections:
		\[
		\mathcal A_{j,N}
		=
		\bigcup_{\substack{F\subseteq\{1,\ldots,N\}\\
				|F|\ge \lceil cN/2\rceil}}
		\bigcap_{n\in F}\{\theta\in\mathbb T:n\theta\in W_{j,N}\}.
		\]
		
		We claim that for any fixed $j,M\ge 1$, the union $\bigcup_{N\ge M}\mathcal A_{j,N}$ is dense in $\mathbb T$. Let $I\subseteq\mathbb T$ be a non-empty open arc. Applying Lemma~\ref{lem:interval-estimate-unified}, for every $N\ge1$, we obtain
		\begin{align*}
			\int_I
			\bigl|\{1\le n\le N:n\theta\in W_{j,N}\}\bigr|
			\,dm(\theta)
			&=
			\sum_{n=1}^N
			\int_I\mathbf 1_{W_{j,N}}(n\theta)\,dm(\theta) \\
			&\ge
			\sum_{n=1}^N
			\left(m(I)m(W_{j,N})-\frac{2}{n}\right) \\
			&\ge
			m(I)cN-2\sum_{n=1}^N\frac1n.
		\end{align*}
		Since $\sum_{n=1}^N 1/n = o(N)$, the final expression strictly exceeds $(c/2)m(I)N$ for all sufficiently large $N$. By choosing such an $N\ge M$, the set $I\cap\mathcal A_{j,N}$ must be non-empty, proving the density claim.
		
		For each $n\ge1$, the set $\{\theta\in\mathbb T:n\theta\in U\}$ is dense and open because multiplication by $n$ is a continuous open surjection on $\mathbb T$. Let $\mathbb T_{\mathrm{irr}}$ denote the dense $G_\delta$ set of irrational points in $\mathbb T$. By the Baire Category Theorem, the intersection
		\[
		\mathbb T_{\mathrm{irr}}
		\cap
		\left(
		\bigcap_{n=1}^{\infty}\{\theta\in\mathbb T:n\theta\in U\}
		\right)
		\cap
		\left(
		\bigcap_{j=1}^{\infty}
		\bigcap_{M=1}^{\infty}
		\bigcup_{N\ge M}\mathcal A_{j,N}
		\right)
		\]
		is a dense $G_\delta$ set. We select a point $\theta$ from this intersection. By construction, $\theta$ is irrational, $n\theta\in U$ for all $n\ge1$, and for every $j, M \ge 1$ there exists $N\ge M$ such that $\theta\in\mathcal A_{j,N}$.
		
		We inductively extract a increasing sequence $N_j$. Suppose $N_1,\ldots,N_{j-1}$ have been chosen. Setting $M=N_{j-1}+1$, there exists $N_j\ge M$ such that $\theta\in\mathcal A_{j,N_j}$. Thus, $N_j$ is strictly increasing. Define
		\[
		E_j^+:=\{1\le n\le N_j:n\theta\in W_{j,N_j}\},
		\qquad
		E_+:=\bigcup_{j=1}^{\infty}E_j^+.
		\]
		Since $\theta\in\mathcal A_{j,N_j}$, we have $|E_j^+|\ge \frac{c}{2}N_j$. It follows that
		\[
		\limsup_{N\to\infty}\frac{|E_+\cap[1,N]|}{N}
		\ge
		\frac{c}{2}.
		\]
		
		We define the symmetric set $E:=E_+\cup(-E_+)$. Since $U$ is symmetric, $n\theta\in U$ for every $n\in E$. Evaluating the density along the sequence $N_j$ yields
		\[
		|E\cap[-N_j,N_j]|
		\ge
		2|E_+\cap[1,N_j]|
		\ge
		cN_j.
		\]
		Consequently, the upper density satisfies $\overline d(E) \ge c/2 > 0$.
		
		To verify summability, we sum over the sets $E_j^+$. Since $|E_j^+|\le N_j$, we have
		\begin{align*}
			\sum_{n\in E_+}h(n\theta)
			&\le
			\sum_{j=1}^{\infty}\sum_{n\in E_j^+}h(n\theta) \\
			&\le
			\sum_{j=1}^{\infty}|E_j^+|\frac{2^{-j}}{N_j} \le
			\sum_{j=1}^{\infty}2^{-j}=1.
		\end{align*}
		Since $h$ is an even function, $\sum_{n\in E}h(n\theta) \le 2\sum_{n\in E_+}h(n\theta)<\infty$. Finally, since $n\theta\in U$ for every $n\in E$, it follows that $h(n\theta)>0$ for every $n\in E$.
	\end{proof}
	
\subsection{{ The proof of Theorem \ref{thm:positive-density-counterexample}}}
	
	Let $\theta$ and $E\subseteq\mathbb Z$ be given by Lemma~\ref{lem:unified-E}. Let $\Omega:=\{0,1\}^{\mathbb Z}$ and $\nu:=(\frac{1}{2},\frac{1}{2})^{\mathbb Z}$. Let $S:\Omega\to\Omega$ denote the two-sided Bernoulli shift given by $(S\omega)_k=\omega_{k+1}$. Define the product system
	\[
	X:=\mathbb T\times\Omega,
	\qquad
	\mu:=m\times\nu,
	\qquad
	T(z,\omega):=(z+\theta,S\omega).
	\]
	Since $\theta$ is irrational and $S$ is weakly mixing, the system $(X,\mu,T)$ is ergodic. By Lemma~\ref{lem:rotation-bernoulli-pronilfactor}, its $\infty$-step pro-nilfactor is the circle rotation factor $\mathcal Z_\infty = \mathcal B_{\mathbb T}\otimes\{\emptyset,\Omega\}$.
	
	Define the measurable subsets $B, A \subseteq X$ by
	\[
	B:=\{(z,\omega):z\in O,\ \omega_0=0\}
	\]
	and
	\[
	A:=
	\left\{(z,\omega):z\in O,\ \omega_n=1
	\text{ for every } n\in E \text{ such that } z+n\theta\in O\right\}.
	\]
	For each $z\in\mathbb T$, define the set of relevant indices
	\[
	F_z:=\{n\in E:z\in O,\ z+n\theta\in O\},
	\]
	and let $N_E(z):=|F_z| \in\mathbb N\cup\{0,\infty\}$. Equivalently,
	\[
	N_E(z)=\mathbf 1_O(z)\sum_{n\in E}\mathbf 1_O(z+n\theta).
	\]
	By Lemma~\ref{lem:unified-E}, integrating $N_E$ yields
	\begin{align*}
		\int_{\mathbb T}N_E(z)\,dm(z)
		&=
		\sum_{n\in E}m(O\cap(O-n\theta)) \\
		&=
		\sum_{n\in E}h(n\theta)<\infty.
	\end{align*}
	Thus, $N_E(z)<\infty$ for $m$-a.e. $z\in\mathbb T$.
	
	We compute the conditional expectations onto $\mathcal Z_\infty$. For $B$, it is clear that
	\begin{equation}\label{eq:CE-B-unified}
		\mathbb E(\mathbf 1_B\mid\mathcal Z_\infty)(z,\omega)
		=
		\frac{1}{2}\mathbf 1_O(z).
	\end{equation}
	For $A$, consider the $z$-sections $A_z:=\{\omega\in\Omega:(z,\omega)\in A\}$. If $z\notin O$, then $A_z=\emptyset$. If $z\in O$, the condition reduces to $A_z=\bigcap_{n\in F_z}\{\omega\in\Omega:\omega_n=1\}$. By the independence of the Bernoulli coordinates, $\nu(A_z)=2^{-|F_z|}$. (If $F_z$ is infinite, approximating by finite subsets yields $\nu(A_z)=0$). Utilizing the convention $2^{-\infty}=0$, we have $\nu(A_z)=\mathbf 1_O(z)2^{-N_E(z)}$ for all $z\in\mathbb T$. This identifies the conditional expectation:
	\begin{equation}\label{eq:CE-A-unified}
		\mathbb E(\mathbf 1_A\mid\mathcal Z_\infty)(z,\omega)
		=
		\mathbf 1_O(z)2^{-N_E(z)}.
	\end{equation}
	
	Both sets have positive measure: $\mu(B)=\frac{1}{2}m(O)>0$, and
	\[
	\mu(A)
	=
	\int_{\mathbb T}\mathbf 1_O(z)2^{-N_E(z)}\,dm(z)>0,
	\]
	which is strictly positive because $N_E(z)<\infty$ almost everywhere.
	
	For $n\in\mathbb Z$, define the exact and pro-nilfactor correlations:
	\[
	\alpha_n:=
	\int_X\mathbf 1_A\cdot T^n\mathbf 1_B\,d\mu,
	\qquad
	\beta_n:=
	\int_X
	\mathbb E(\mathbf 1_A\mid\mathcal Z_\infty)
	\cdot
	T^n\mathbb E(\mathbf 1_B\mid\mathcal Z_\infty)
	\,d\mu.
	\]
	We demonstrate that the discrepancy set coincides with $E$.
	
	Suppose $n\in E$. If $(z,\omega)\in A\cap T^{-n}B$, then $T^n(z,\omega)\in B$. This implies $z+n\theta\in O$ and $\omega_n=0$. However, since $(z,\omega)\in A$, $n\in E$, and $z+n\theta\in O$, the definition of $A$ requires $\omega_n=1$, a contradiction. Hence, $\alpha_n=0$.

Using \eqref{eq:CE-B-unified} and \eqref{eq:CE-A-unified}, we obtain
	\[
	\beta_n
	=
	\frac{1}{2}\int_{\mathbb T}
	\mathbf 1_O(z)2^{-N_E(z)}\mathbf 1_O(z+n\theta)\,dm(z).
	\]
	Since $n\in E$, we have $n\theta\in O-O$. Thus, $O\cap(O-n\theta)$ has strictly positive measure. Because $N_E<\infty$ almost everywhere, the integrand is strictly positive on a set of positive measure, yielding $\beta_n>0$. This establishes $E\subseteq \{n:\beta_n>0\} \setminus \{n:\alpha_n>0\}$.
	
	Conversely, suppose $n\notin E$. Assume $\beta_n>0$. For fixed $z$, the section $A_z$ depends only on coordinates indexed by $F_z\subseteq E$. Since $n\notin E$, $\omega_n$ is independent of the coordinates governing $A_z$. This independence yields
	\[
	\nu\bigl(A_z \cap \{\omega:\omega_n=0\}\bigr)
	=
	\frac{1}{2}\nu(A_z)
	=
	\frac{1}{2}\cdot 2^{-N_E(z)}.
	\]
	Integrating over $z\in\mathbb T$ yields
	\begin{align*}
		\alpha_n
		&=
		\int_{\mathbb T}
		\mathbf 1_O(z)\mathbf 1_O(z+n\theta)
		\nu\bigl(A_z \cap \{\omega:\omega_n=0\}\bigr)
		\,dm(z) \\
		&=
		\frac{1}{2}\int_{\mathbb T}
		\mathbf 1_O(z)2^{-N_E(z)}\mathbf 1_O(z+n\theta)\,dm(z)=\beta_n.
	\end{align*}
	Thus, $\beta_n>0$ implies $\alpha_n>0$ for every $n\notin E$, meaning no integer outside $E$ belongs to the discrepancy set. This completes the proof of Theorem~\ref{thm:positive-density-counterexample}.

	\section{Proof of Theorem~\ref{thm:non-piecewise-syndetic}}
	
	We now prove that zero-threshold discrepancy sets, despite potentially having positive upper density, are not piecewise syndetic. 
	We rely on the following combinatorial lemma in \cite{GKLMMRR}.
	
	\begin{lemma}
		\cite[Lemma~2.1]{GKLMMRR}\label{lem:set-algebraic}
		Let $A,B\subseteq\mathbb Z$, and let $\mathcal F$ be an upward-closed family of subsets of $\mathbb Z$ such that the intersection of all sets in any finite sub-collection of $\mathcal F$ is syndetic. If $B\subseteq A\subseteq B-\mathcal F$, then $A\setminus B$ is not piecewise syndetic.
	\end{lemma}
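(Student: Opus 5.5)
The plan is to argue by contradiction in the Stone--\v{C}ech compactification $\beta\mathbb Z$, using the standard dictionary between largeness notions and ultrafilters. Write $P:=A\setminus B$, and for an ultrafilter $u$ write $a+u:=\{E\subseteq\mathbb Z: E-a\in u\}$. Three facts will be used freely. First, a set is piecewise syndetic iff it belongs to some ultrafilter in the smallest ideal $K(\beta\mathbb Z)$. Second, a set is thick iff its closure contains a minimal left ideal. Third, a set $S$ is syndetic iff it meets every thick set, equivalently iff $\overline S$ meets every minimal left ideal $L$.

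\textbf{Step 1 (ultrafilter realizing $\mathcal F$).} Assuming $P$ is piecewise syndetic, I will fix $w\in K(\beta\mathbb Z)$ with $P\in w$, and let $L:=\beta\mathbb Z+w$, a minimal left ideal. The family $\{\overline F\cap L:F\in\mathcal F\}$ consists of closed subsets of the compact set $L$. It has the finite intersection property: a finite intersection $F_1\cap\dots\cap F_k$ is syndetic by hypothesis, so its closure meets $L$, and $\overline{F_1\cap\cdots\cap F_k}\subseteq \overline{F_1}\cap\cdots\cap\overline{F_k}$. Compactness then gives $u\in L$ with $\mathcal F\subseteq u$. The hypothesis $A\subseteq B-\mathcal F$ now says that $B-a\in u$ for every $a\in A$. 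In ultrafilter language this reads $B\in a+u$ for all $a\in A$, and by continuity of $y\mapsto y+u$ it follows that $B\in y+u$ for every $y\in\overline A$.

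\textbf{Step 2 (locating $w$).} By minimality of $L$ we have $L=\beta\mathbb Z+u$, so $w=y+u$ for some $y\in\beta\mathbb Z$. If $A\in y$, Step 1 gives $B\in y+u=w$. Together with $P\in w$ this puts $\emptyset=P\cap B$ into $w$, which is the desired contradiction.

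\textbf{Main obstacle: forcing $A\in y$.} The difficulty is to choose the decomposition of $w$ so that $A\in y$. From $P\in w=y+u$ we only get $Y:=\{n: P-n\in u\}\in y$. My first attempt is to exploit the freedom in choosing $w$ and $u$ inside $L$. One can replace $w$ by $w+e$ for a suitable idempotent $e\in L$ with $u=u+e$, or pass to $w':=w+u\in L$. One then checks, using $P\subseteq A$ and the fact that $P\in w$ forces $\overline P\cap K\neq\emptyset$, that some $y$ with $w=y+u$ can be taken in $\overline P\subseteq\overline A$.

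If this bookkeeping resists, the fallback is an elementary finitary version of the same argument. Fix a finite $H$ with $P-H:=\bigcup_{h\in H}(P-h)$ thick. Take long intervals $I\subseteq P-H$, and for each $n\in I$ choose $h_n\in H$ with $n+h_n\in P\subseteq A$. The goal is to find a single shift $t$ lying in finitely many sets $B-(n+h_n)$ while also $n+h_n+t\in P$, which is impossible since $P\cap B=\emptyset$. Here the difficulty is the uniformity of the syndeticity gap of the intersection relative to the interval length. That uniformity is exactly what the compactness step in $\beta\mathbb Z$ supplies, which is why I would pursue the ultrafilter route first.
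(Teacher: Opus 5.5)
There is a genuine gap, and it is exactly the step you flag as the main obstacle. The claim that some $y\in\overline{P}$ satisfies $y+u=w$ is not bookkeeping. It is the whole content of the lemma, and the facts you propose to use for it ($P\subseteq A$ and $\overline{P}\cap K(\beta\mathbb Z)\neq\emptyset$) cannot imply it.

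Your argument never uses the hypothesis $B\subseteq A$, and without that hypothesis the lemma is false. Take $A=2\mathbb Z$, $B=2\mathbb Z+1$ and $\mathcal F=\{F\subseteq\mathbb Z: F\supseteq 2\mathbb Z+1\}$. Then $B-a=2\mathbb Z+1\in\mathcal F$ for every $a\in A$, yet $A\setminus B=2\mathbb Z$ is syndetic. In this example every $u\supseteq\mathcal F$ contains $2\mathbb Z+1$, so $2\mathbb Z\in y+u$ forces $2\mathbb Z+1\in y$. Hence no $y$ with $y+u=w$ lies in $\overline{A}$.

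The manoeuvres you suggest do not help either. An idempotent $e\in L$ is a right identity on $L$, so $w+e=w$ and nothing changes. Passing to $w+u$ gives $B\in w+u$ but loses $P\in w+u$. The finitary fallback is likewise left exactly where the real difficulty begins.

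The missing idea is to make $u$ itself an idempotent, and this is where $B\subseteq A$ enters.
\begin{itemize}
\item Put $D:=\bigcap_{a\in A}\overline{B-a}=\{x\in\beta\mathbb Z: B\in a+x \text{ for all } a\in A\}$. It is closed, and your Step 1 applied to the subfamily $\{B-a:a\in A\}\subseteq\mathcal F$ gives $D\cap L\neq\emptyset$.
\item $D$ is a subsemigroup. Let $x,x'\in D$ and $a\in A$. Then $B-m\in x'$ for every $m\in B\subseteq A$. So the set $\{n: B-(a+n)\in x'\}$ contains $B-a$, which belongs to $x$. Since $(B-a)-n=B-(a+n)$, this says precisely that $B-a\in x+x'$.
\item Since $L$ is a left ideal, $D\cap L$ is a compact right topological semigroup. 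By Ellis--Numakura it contains an idempotent $e$.
\item Then $e$ is a right identity on $L$, so $w=w+e$. We have $A\in w$, the map $x\mapsto x+e$ is continuous, and $a+e\in\overline{B}$ for all $a\in A$. Together these give $B\in w$, contradicting $P\in w$.
\end{itemize}

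The paper itself does not prove this lemma; it quotes it from the cited reference. So this is a comparison with a correct argument rather than with the paper's text.
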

	
	The primary ingredient is a measure-theoretic lifting lemma, which establishes that any finite collection of positivity conditions on the pro-nilfactor can be simultaneously realized by exact correlations along a syndetic set of times.
	
	\begin{lemma}\label{lem:syndetic-lifting}
		Let $(X,\mu,T)$ be an ergodic system, and let $\mathcal Z_\infty$ be its $\infty$-step pro-nilfactor.
		For each $j\in\{1,\ldots,r\}$, let $q^{(j)}_1,\ldots,q^{(j)}_{\ell_j}\in\mathcal P_0$ be distinct polynomials, and let $h_{j,i}\in L^\infty(\mu)$ be non-negative functions. Suppose that
		\[
		\int_X
		\prod_{i=1}^{\ell_j}
		\mathbb E(h_{j,i}\mid\mathcal Z_\infty)\,d\mu
		>0,
		\qquad j\in\{1,\ldots,r\}.
		\]
		Then the intersection set
		\[
		\bigcap_{j=1}^r
		\bigg\{
		n\in\mathbb Z:
		\int_X
		\prod_{i=1}^{\ell_j}
		T^{q_i^{(j)}(n)}h_{j,i}\,d\mu>0
		\bigg\}
		\]
		is syndetic.
	\end{lemma}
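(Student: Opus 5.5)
The plan is to pass to the pro-nilfactor, show that each pro-nilfactor correlation sequence is a nilsequence whose positivity set is syndetic, intersect these positive-threshold sets, and lift back to the original system using the Banach-null error. Set
\[
\beta_j(n):=\int_X\prod_{i=1}^{\ell_j}T^{q^{(j)}_i(n)}\mathbb E(h_{j,i}\mid\mathcal Z_\infty)\,d\mu,
\qquad
\alpha_j(n):=\int_X\prod_{i=1}^{\ell_j}T^{q^{(j)}_i(n)}h_{j,i}\,d\mu .
\]
Since $q^{(j)}_i(0)=0$, the hypothesis says exactly $\beta_j(0)>0$.

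By Theorem~\ref{thm:structural-facts-polynomial-correlations}(2), applied to the bounded functions $\mathbb E(h_{j,i}\mid\mathcal Z_\infty)$, each $\beta_j$ decomposes as $\psi_j+\lambda_j$, with $\psi_j$ a nilsequence and $\lambda_j$ Banach-null. The first goal is a lower bound $\sup_n\psi_j(n)>0$, which is needed for Lemma~\ref{lem:nilsequence-positive-set-syndetic}.

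\textbf{Main obstacle: positivity must survive the Banach-null error.} Knowing $\beta_j(0)>0$ at the single point $n=0$ says nothing about $\psi_j$, since $\lambda_j$ could absorb it. I would instead show that $\beta_j(n)\ge \beta_j(0)/2$ on a set of positive upper Banach density. This would force $\sup\psi_j\ge\beta_j(0)/2$: otherwise $\lambda_j>\beta_j(0)/2-\sup\psi_j$ on that set, contradicting Banach-nullness.

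I would try two routes for this lower bound.
\begin{enumerate}
\item \emph{Continuity on the nilfactor.} Pass to a finite-step nilsystem model of $\mathcal Z_\infty$ and approximate the conditional expectations in $L^1$ by continuous functions. Then $\beta_j(n)$ is close to $\beta_j(0)$ whenever the polynomial orbit $(T^{q_i(n)})_i$ is close to the identity in the nilrotation. By the polynomial recurrence and equidistribution of Leibman, this happens on a syndetic (indeed IP$^*$-type) set of $n$.
\item \emph{The set itself is a nilsequence.} Alternatively, argue that $\beta_j$ is itself a nilsequence evaluated continuously at the base point, so $\lambda_j\equiv0$ may be arranged. I expect the first route to be the safer one to write down.
\end{enumerate}

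Once $\sup\psi_j>0$ is in hand, the rest is routine. Let $\psi:=\sum_j$-combination is not needed; instead consider the single nilsequence
\[
\Psi(n):=\min_j\bigl(\psi_j(n)-c_j\bigr),
\]
with each $c_j$ chosen below $\sup\psi_j$. Because the $\psi_j$ can be realized jointly on a product pro-nilsystem, their joint orbit closure is minimal.

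\textbf{Simultaneous positivity.} A common time $n_0$ at which every $\psi_j>c_j$ comes from the same recurrence: choose $n_0$ near $0$ in the joint orbit, where every $\beta_j$ is close to $\beta_j(0)$. The minimality argument of Lemma~\ref{lem:nilsequence-positive-set-syndetic} then shows that
\[
S:=\bigcap_{j=1}^r\{n:\psi_j(n)>c_j\}
\]
is syndetic.

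\textbf{Lifting to the original system.} By Theorem~\ref{thm:structural-facts-polynomial-correlations}(1), each difference $\alpha_j-\beta_j$ is Banach-null, so $\alpha_j=\psi_j+\lambda_j'$ with $\lambda'_j$ Banach-null. The exceptional set $\bigcup_j\{n:|\lambda'_j(n)|\ge c_j\}$ has zero upper Banach density. Removing it from $S$ leaves a syndetic set by Lemma~\ref{lem:syndetic-minus-null}. On that set $\alpha_j(n)>\psi_j(n)-c_j>0$ for every $j$, which gives the claimed syndeticity of the intersection.
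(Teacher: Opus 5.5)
Your overall shape (decompose, find a syndetic set where the nilsequence part is large, then discard Banach-null exceptional sets) matches the end of the paper's argument. The gap is in the step you correctly flag as the main obstacle: the resolution you propose rests on a false claim. Positivity of $\beta_j(0)$ does not give $\beta_j(n)\ge\beta_j(0)/2$ on a set of positive upper Banach density. Nor is $\beta_j$ a nilsequence, even when the functions are continuous on a nilsystem.

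Here is a counterexample. Take $X=\mathbb T^2$ with $T(x,z)=(x+\alpha,z+x)$ and $\alpha$ irrational. This is an ergodic $2$-step nilsystem, so $\mathcal Z_\infty$ is the full $\sigma$-algebra. Let $h_1=h_2=\mathbf 1_{\mathbb T\times[0,\varepsilon)}$, $q_1=0$, $q_2(n)=n$.
\begin{itemize}
\item At $n=0$ we have $\beta(0)=\varepsilon$.
\item Since $T^n(x,z)=(x+n\alpha,\,z+nx+\binom n2\alpha)$ and $x\mapsto nx$ preserves Lebesgue measure for $n\neq0$, we get $\beta(n)=\varepsilon^2$ for every $n\ne0$.
\end{itemize}
Thus for $\varepsilon<1/2$ we get $\{n:\beta(n)\ge\beta(0)/2\}=\{0\}$. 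The decomposition is $\psi\equiv\varepsilon^2$ and $\lambda=(\varepsilon-\varepsilon^2)\mathbf 1_{\{0\}}$, so $\psi(0)$ is far from $\beta(0)$. The lemma's conclusion still holds here, but not for your reason.

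Both of your routes break on this example.
\begin{itemize}
\item Route 1 fails because on a non-abelian nilmanifold the maps $T^{q(n)}$ are not close to the identity even when the orbit of the base point returns near it; here the shear $z\mapsto z+nx$ persists for every $n\neq0$.
\item Route 2 fails because the null part is genuinely nonzero at exceptional times such as $n=0$.
\end{itemize}
The same defect breaks your simultaneity step. Since $\psi_j(0)$ need not be near $\beta_j(0)$, returning to the starting point of the joint orbit gives nothing. Moreover, thresholds $c_j<\sup\psi_j$ chosen separately for each $j$ need not be exceeded at a common time.

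What is missing is a quantitative recurrence input that uses the non-negativity of the $H_{j,i}$ rather than the single value at $n=0$. The paper proceeds as follows.
\begin{itemize}
\item Choose $\eta_j>0$ such that $C_j:=\bigcap_i\{H_{j,i}>\eta_j\}$ has positive measure. Then $\beta_j(n)\ge\eta_j^{\ell_j}\,\mu\bigl(C_j\cap\bigcap_iT^{-q_i^{(j)}(n)}C_j\bigr)$.
\item Apply the Bergelson--Leibman polynomial multiple recurrence theorem to $C_1\times\cdots\times C_r$ in $X^r$, under the commuting maps $S_j$ that act as $T$ on the $j$-th coordinate. This gives $\liminf_{N}\frac1N\sum_{n=1}^N\prod_j\beta_j(n)>0$, which controls all $j$ simultaneously.
\item Decompose the product as $\prod_j\beta_j=\psi+\lambda$. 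This forces $\sup\psi>0$ and yields a syndetic set on which every $\beta_j$ exceeds a common positive constant, using $\sup_n\beta_j(n)<\infty$.
\end{itemize}
From there, your lifting step via Lemma~\ref{lem:syndetic-minus-null} completes the proof.
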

	
	\begin{proof}
		For $j\in\{1,\ldots,r\}$ and $i\in\{1,\ldots,\ell_j\}$, set $H_{j,i}:=\mathbb E(h_{j,i}\mid\mathcal Z_\infty)$, and define the sequences
		\[
		\alpha_j(n):=
		\int_X
		\prod_{i=1}^{\ell_j}
		T^{q_i^{(j)}(n)}h_{j,i}\,d\mu,
		\qquad
		\beta_j(n):=
		\int_X
		\prod_{i=1}^{\ell_j}
		T^{q_i^{(j)}(n)}H_{j,i}\,d\mu.
		\]
		
		We first establish the uniform lower bound estimate
		\[
		\liminf_{N\to\infty}
		\frac{1}{N}\sum_{n=1}^N
		\prod_{j=1}^r \beta_j(n)>0.
		\]
		Fix $j$. Since $H_{j,i}\ge0$ and the integral of their product is strictly positive, there exists a threshold $\eta_j>0$ such that the set
		\[
		C_j:=\bigcap_{i=1}^{\ell_j}\{H_{j,i}>\eta_j\}
		\]
		has positive measure. For $n\in\mathbb Z$, define
		\[
		c_j(n):=
		\mu\big(
		C_j\cap
		\bigcap_{i=1}^{\ell_j}T^{-q_i^{(j)}(n)}C_j
		\big).
		\]
		
		Consider the product system $X^r$ equipped with the product measure $\mu^{\otimes r}$. Let $S_j$ denote the transformation acting as $T$ on the $j$-th coordinate and as the identity on all other coordinates. The transformations $S_1,\ldots,S_r$ commute. By the polynomial multiple recurrence theorem of Bergelson and Leibman~\cite{BL96} applied to the product set $C:=C_1\times\cdots\times C_r$, we have
		\[
		\liminf_{N\to\infty}\frac{1}{N}\sum_{n=1}^N\mu^{\otimes r}\bigg(C\cap\bigcap_{j=1}^r\bigcap_{i=1}^{\ell_j}S_j^{-q_i^{(j)}(n)}C\bigg) >0.
		\]
		Since the transformations $S_j$ act on distinct coordinates, the measure factors completely into $\prod_{j=1}^r c_j(n)$. Hence,
		\[
		\liminf_{N\to\infty}
		\frac{1}{N}\sum_{n=1}^N
		\prod_{j=1}^r c_j(n)>0.
		\]
		Furthermore, evaluating the pro-nilfactor correlation gives
		\begin{align*}
			\beta_j(n)
			&=
			\int_X
			\prod_{i=1}^{\ell_j}
			T^{q_i^{(j)}(n)} H_{j,i}\,d\mu \\
			&\ge
			\eta_j^{\ell_j}
			\mu\big(
			\bigcap_{i=1}^{\ell_j}T^{-q_i^{(j)}(n)}C_j
			\big) \ge
			\eta_j^{\ell_j}c_j(n).
		\end{align*}
		Consequently, we obtain a strictly positive limit inferior:
		\[
		L:=
		\liminf_{N\to\infty}
		\frac{1}{N}\sum_{n=1}^N
		\prod_{j=1}^r\beta_j(n)>0.
		\]
		
		By Theorem~\ref{thm:structural-facts-polynomial-correlations} (3), each sequence admits a decomposition $\beta_j(n)=\psi_j(n)+\lambda_j(n)$, where $\psi_j$ is a nilsequence and $\lambda_j$ is Banach-null. Applying Lemma~\ref{lem:nilsequence-banach-null-algebra}, their product decomposes as $\prod_{j=1}^r\beta_j(n)=\psi(n)+\lambda(n)$, where $\psi$ is a nilsequence and $\lambda$ is Banach-null. Since the functions involved are real-valued and non-negative, we apply the real-valued nilsequence--null decomposition.
		
		By the definition of $L$, for all sufficiently large $N$,
		\[
		\frac{1}{N}\sum_{n=1}^N\prod_{j=1}^r\beta_j(n)>\frac{3L}{4}.
		\]
		Since $\lambda$ is Banach-null, $\lim_{N-M\to\infty} \frac{1}{N-M}\sum_{n=M}^{N-1} |\lambda(n)|=0$. Thus, for all sufficiently large $N$, $\frac{1}{N}\sum_{n=1}^N|\lambda(n)|<\frac{L}{4}$. We deduce that
		\begin{align*}
			\frac{1}{N}\sum_{n=1}^N\psi(n)
			&=
			\frac{1}{N}\sum_{n=1}^N\prod_{j=1}^r\beta_j(n)
			-
			\frac{1}{N}\sum_{n=1}^N\lambda(n) \\
			&\ge
			\frac{1}{N}\sum_{n=1}^N\prod_{j=1}^r\beta_j(n)
			-
			\frac{1}{N}\sum_{n=1}^N|\lambda(n)| \\
			&>
			\frac{L}{2}.
		\end{align*}
		This strict lower bound implies that $\sup_{n\in\mathbb Z}\psi(n)\ge L/2$. By Lemma~\ref{lem:nilsequence-positive-set-syndetic}, the set $S_0:=\{n\in\mathbb Z:\psi(n)>L/4\}$ is syndetic. Define the exceptional set $E_0:=\{n\in\mathbb Z:|\lambda(n)|>L/8\}$. Since $\lambda$ is Banach-null, $d^*(E_0)=0$. Therefore, by Lemma~\ref{lem:syndetic-minus-null}, the difference set $S_1:=S_0\setminus E_0$ is syndetic. For every $n\in S_1$, we have
		\[
		\prod_{j=1}^r\beta_j(n)>\frac{L}{8}.
		\]
		
		For each $j$, let $M_j:=\sup_{n\in\mathbb Z}\beta_j(n)$. Note that $M_j>0$ for every $j$; otherwise, $\beta_j\equiv 0$, which contradicts $L>0$. Choose $\delta>0$ sufficiently small such that $2\delta\prod_{k\ne j}M_k<\frac{L}{8}$ for all $j\in\{1,\ldots,r\}$ (with the convention that an empty product evaluates to $1$). If there were an $n\in S_1$ such that $\beta_j(n)\le 2\delta$ for some $j$, it would follow that
		\[
		\prod_{m=1}^r\beta_m(n)
		\le
		2\delta\prod_{k\ne j}M_k
		<
		\frac{L}{8},
		\]
		which contradicts the definition of $S_1$. Consequently, $\beta_j(n)>2\delta$ for all $n\in S_1$ and $j\in\{1,\ldots,r\}$.
		
		By Theorem~\ref{thm:structural-facts-polynomial-correlations} (2), each difference sequence $\alpha_j-\beta_j$ is Banach-null. Defining the error sets $\mathcal E_j:= \{n\in\mathbb Z:|\alpha_j(n)-\beta_j(n)|\ge\delta\}$, we have $d^*(\mathcal E_j)=0$. Their finite union $\mathcal E:=\bigcup_{j=1}^r\mathcal E_j$ satisfies $d^*(\mathcal E)=0$. Applying Lemma~\ref{lem:syndetic-minus-null}, the set $S_2:=S_1\setminus\mathcal E$ is syndetic. For every $n\in S_2$ and every $j$, we find
		\[
		\alpha_j(n)
		\ge
		\beta_j(n)-|\alpha_j(n)-\beta_j(n)|
		>
		2\delta-\delta
		=
		\delta>0.
		\]
		Thus, $S_2$ is contained within the desired intersection, proving that the intersection is syndetic.
	\end{proof}
	
	We now proceed to the proof of Theorem~\ref{thm:non-piecewise-syndetic}.
	
	\begin{proof}[Proof of Theorem~\ref{thm:non-piecewise-syndetic}]
		Set $F_i:=\mathbb E(f_i\mid\mathcal Z_\infty)$ for $i\in\{1,\ldots,d\}$, and define the sets
		\[
		R_\beta:=\{n\in\mathbb Z:\beta(n)>0\},
		\qquad
		R_\alpha:=\{n\in\mathbb Z:\alpha(n)>0\}.
		\]
		We apply Lemma~\ref{lem:set-algebraic} with $A:=R_\beta$ and $B:=R_\alpha$.
		
		\medskip
		\noindent
		\emph{Step 1: $R_\alpha\subseteq R_\beta$.}
		
		Observe that if $f\ge0$ and $F=\mathbb E(f\mid\mathcal Z_\infty)$, then $\{f>0\}\subseteq\{F>0\}$ modulo null sets. To see this, note that since $F$ is $\mathcal Z_\infty$-measurable, the zero set $Z_F:=\{F=0\}$ is $\mathcal Z_\infty$-measurable, yielding
		\[
		\int_{Z_F}f\,d\mu
		=
		\int_{Z_F}F\,d\mu
		=0.
		\]
		Since $f\ge0$, $f$ vanishes almost everywhere on $Z_F$.
		
		Let $n\in R_\alpha$. Then
		\[
		\alpha(n)=
		\int_X\prod_{i=1}^dT^{p_i(n)}f_i\,d\mu>0.
		\]
		Since all $f_i$ are non-negative, the intersection $G_n:=\bigcap_{i=1}^d\{T^{p_i(n)}f_i>0\}$ has positive measure. Because $\mathcal Z_\infty$ is $T$-invariant, conditional expectation commutes with the transformation:
		\[
		\mathbb E(T^{p_i(n)}f_i\mid\mathcal Z_\infty)
		= T^{p_i(n)}\mathbb E(f_i\mid\mathcal Z_\infty)=
		T^{p_i(n)}F_i.
		\]
		Applying our observation to $T^{p_i(n)}f_i$ gives $\{T^{p_i(n)}f_i>0\} \subseteq \{T^{p_i(n)}F_i>0\}$ modulo null sets. This guarantees that
		\[
		\mu\left(
		\bigcap_{i=1}^d\{T^{p_i(n)}F_i>0\}
		\right)>0.
		\]
		On this set, the product $\prod_{i=1}^dT^{p_i(n)}F_i$ is strictly positive, yielding a strictly positive integral:
		\[
		\beta(n)=
		\int_X\prod_{i=1}^dT^{p_i(n)}F_i\,d\mu>0.
		\]
		Therefore $n\in R_\beta$, proving the inclusion $R_\alpha\subseteq R_\beta$.
		
		\medskip
		\noindent
		\emph{Step 2: Construction of the family $\mathcal F$.}
		
		Let $\mathcal F$ be the family of all sets $A\subseteq\mathbb Z$ for which there exist $\ell\ge1$, distinct polynomials $q_1,\ldots,q_\ell\in\mathcal P_0$, and non-negative real-valued functions $h_1,\ldots,h_\ell\in L^\infty(\mu)$ such that
		\[
		\int_X
		\prod_{i=1}^{\ell}
		\mathbb E(h_i\mid\mathcal Z_\infty)
		\,d\mu
		>0
		\]
		and
		\[
		R(q_1,\ldots,q_\ell;h_1,\ldots,h_\ell)
		:=
		\left\{
		n\in\mathbb Z:
		\int_X\prod_{i=1}^{\ell}T^{q_i(n)}h_i\,d\mu>0
		\right\}\subseteq A.
		\]
		By definition, $\mathcal F$ is upward-closed.
		
		We claim that every finite intersection of members of $\mathcal F$ is syndetic. Let $F_1,\ldots,F_s\in\mathcal F$. Since $\mathcal F$ is  upward-closed, for each $a\in\{1,\ldots,s\}$ there exists a base set $B_a=R(q_1,\ldots,q_\ell;h_1,\ldots,h_\ell)$ such that $B_a\subseteq F_a$. Each $B_a$ satisfies the corresponding pro-nilfactor positivity condition. Lemma~\ref{lem:syndetic-lifting}, applied to this finite collection, implies that $\bigcap_{a=1}^s B_a$ is syndetic. Since $\bigcap_{a=1}^s B_a \subseteq \bigcap_{a=1}^s F_a$, the intersection $\bigcap_{a=1}^s F_a$ is also syndetic.
		
		\medskip
		\noindent
		\emph{Step 3: $R_\beta\subseteq R_\alpha-\mathcal F$.}
		
		Fix $t\in R_\beta$. We show that $R_\alpha-t\in\mathcal F$. Define the shifted polynomials and functions:
		\[
		q_i(n):=p_i(n+t)-p_i(t),
		\qquad
		h_i:=T^{p_i(t)}f_i,
		\qquad i\in\{1,\ldots,d\}.
		\]
		By construction, $q_i(0)=0$, meaning $q_i\in\mathcal P_0$. Furthermore, the polynomials $q_1,\ldots,q_d$ remain distinct: for $i\ne i'$, the difference $q_i(n)-q_{i'}(n)$ is non-constant since $p_i-p_{i'}$ is non-constant.
		
		For any $n\in\mathbb Z$, the exact correlation for these shifted parameters is
		\begin{align*}
			\int_X\prod_{i=1}^dT^{q_i(n)}h_i\,d\mu
			&=
			\int_X\prod_{i=1}^d T^{p_i(n+t)-p_i(t)}T^{p_i(t)}f_i\,d\mu \\
			&=
			\int_X\prod_{i=1}^dT^{p_i(n+t)}f_i\,d\mu \\
			&=
			\alpha(n+t).
		\end{align*}
		Therefore, the shifted exact return set satisfies
		\[
		R_\alpha-t
		=
		\left\{
		n\in\mathbb Z:
		\int_X\prod_{i=1}^dT^{q_i(n)}h_i\,d\mu>0
		\right\}.
		\]
		
		To verify the positivity condition for $\mathcal F$, we use the $T$-invariance of $\mathcal Z_\infty$:
		\[
		\mathbb E(h_i\mid\mathcal Z_\infty)
		=
		\mathbb E(T^{p_i(t)}f_i\mid\mathcal Z_\infty)
		=
		T^{p_i(t)}F_i.
		\]
		Consequently, the integral over the pro-nilfactor evaluates to
		\begin{align*}
			\int_X  \prod_{i=1}^d  \mathbb E(h_i\mid\mathcal Z_\infty)  \,d\mu
			&=
			\int_X\prod_{i=1}^dT^{p_i(t)}F_i\,d\mu  \\
			&=
			\beta(t).
		\end{align*}
		Since $t\in R_\beta$, this integral is strictly positive, meaning $R_\alpha-t\in\mathcal F$. This establishes the inclusion $R_\beta\subseteq R_\alpha-\mathcal F$.
		
		\medskip
		\noindent
		\emph{Step 4: Conclusion.}
		
		We have established the inclusions
		\[
		R_\alpha\subseteq R_\beta
		\qquad\text{and}\qquad
		R_\beta\subseteq R_\alpha-\mathcal F,
		\]
		and confirmed that every finite intersection of members of the upward-closed family $\mathcal F$ is syndetic. By Lemma~\ref{lem:set-algebraic}, the difference set $R_\beta\setminus R_\alpha$ is not piecewise syndetic. Since this set corresponds exactly to the zero-threshold discrepancy set, the proof is complete.
	\end{proof}

\end{document}